\newtheorem{theorem}{Theorem}[section]
\newtheorem{proposition}[theorem]{Proposition}
\newtheorem{lemma}[theorem]{Lemma}
\newtheorem{cor}[theorem]{Corollary}
\theoremstyle{plain}
\numberwithin{equation}{theorem}
\theoremstyle{remark}
\newtheorem{remark}[theorem]{Remark}
\newcommand{\bN}{\mathbb{N}}
\newcommand{\bA}{\mathbb{A}}
\newcommand{\bC}{{\mathbb C}}
\newcommand{\bZ}{{\mathbb Z}}
\newcommand{\bP}{{\mathbb P}}
\newcommand{\fo}{{\mathfrak o}}
\newcommand{\bQ}{{\mathbb Q}}
\newcommand{\lra}{\longrightarrow}
\newcommand{\cU}{\mathcal U}
\DeclareMathOperator{\Pic}{Pic}
\DeclareMathOperator{\PGL}{PGL}
\begin{document}
\title{Integral points in two-parameter orbits}
\author[Corvaja, Sookdeo, Tucker, Zannier]{Pietro Corvaja, 
Vijay Sookdeo, Thomas J. Tucker, Umberto Zannier}

\address{Pietro Corvaja,
Dipartimento di Matematica e Informatica,
Universit\`a di Udine, \\
Via delle Scienze, 206 \\
33100 Udine\\
Italy
}
\email{pietro.corvaja@uniud.it}

\address{
Vijay Sookdeo,
Department of Mathematics \\
The Catholic University of America \\
Cardinal Station \\
Washington, DC 20064
}
\email{sookdeo@cua.edu}

\address{Thomas J. Tucker,
Department of Mathematics\\
Hylan Building\\
University of Rochester\\
Rochester, NY 14627
}

\email{ttucker@math.rochester.edu}

\address{Umberto Zannier,
Scuola Normale Superiore,\\
Piazza dei Cavalieri, 7\\
56126 Pisa\\
 Italy
}

\email{u.zannier@sns.it}

\maketitle

\begin{abstract}
  Let $K$ be a number field, let $f: \bP_1 \lra \bP_1$ be a
  nonconstant rational map of degree greater than 1, let $S$ be a finite set of places of $K$,
  and suppose that $u, w \in \bP_1(K)$ are not preperiodic under $f$.
  We prove that the set of $(m,n) \in \bN^2$ such that $f^{\circ
    m}(u)$ is $S$-integral relative to $f^{\circ n}(w)$ is finite and
  effectively computable.  This may be thought of as a two-parameter
  analog of a result of Silverman on integral points in orbits of
  rational maps.  

This issue can be translated in terms of integral points on an open subset  of $\bP_1^2$; then one can apply a modern version of the method of Runge, after increasing the number of components at infinity by iterating the rational map. Alternatively, an ineffective result comes from a well-known theorem of Vojta.
\end{abstract}

\section{Introduction}\label{intro}

In 1929, Siegel \cite{Siegel} proved that if $C$ is an irreducible
affine curve defined over a number field $K$ and $C$ has at least
three points at infinity, then there are at most finitely many
$K$-rational points on $C$ that have integral coordinates. When $C$
has positive genus, something stronger is true: any affine
curve has defined over a number field has at most finitely many
$K$-rational points that have integral coordinates.  Silverman
\cite[Theorem A]{SilSiegel} later gave a dynamical variant of Siegel's
theorem, proving that if $f: \bP_1 \lra \bP_1$ is a rational
function such that $f^{\circ 2}$ is not a polynomial and $u \in K$
is not preperiodic for $f$, there are only finitely many $n$ such
that $f^{\circ n}(u)$ is integral relative to the point at
infinity (we will give a  full definition of what it means to be integral
relative to a point in Section~\ref{nota}).  Moreover, \cite[Theorem A]{SilSiegel} can be made
effective, as we shall see
Section 5.

Recently, various authors (see \cite{Denis, Bell, JNT}) have proposed
a dynamical analog of the Mordell-Lang conjecture for semiabelian
varieties. The Mordell-Lang conjecture for semiabelian varieties,
which was proved by Faltings \cite{Faltings} and Vojta \cite{PV2},
states that if a finitely generated subgroup $\Gamma$ of a semiabelian
variety $A$ over $\bC$ intersects a subvariety $V \subseteq S$ in
infinitely many points, then $V$ must contain a positive-dimensional
algebraic subgroup of $A$.  One dynamical analog asserts that if one
has a morphism of varieties $\Phi: X \lra X$ defined over
$\bC$, a subvariety $V \subseteq X$, and a point $\alpha$ in
$X(\bC)$, then the forward orbit of $\alpha$ under $\Phi$ (that is,
the set of distinct iterates $\Phi^{\circ n}(\alpha)$) may intersect
$V$ infinitely often only if $V$ contains a $\Phi$-periodic subvariety
of $X$ (that is, a subvariety of $W$ of $X$ such that $\Phi^{\circ
  n}(W) = W$ for some $n > 0$) having positive dimension.  Note, however, that since the forward
orbit of a point under a single map is parametrized by the positive
integers, it is more analogous to a cyclic group $\Gamma$ than it is
to an arbitrary finitely generated group $\Gamma$.  Thus, one might
ask for a ``multi-parameter'' dynamical conjecture concerning the
forward orbit of $\alpha$ under a finitely generated semigroup of
commuting maps.  In \cite{GTZ2}, this problem is considered and
results are obtained in the case where the subvarieties $V$ are lines
in $\bA^2$ and the semigroup of morphisms is the set of all $(f^{\circ
  m}, g^{\circ n})$ for fixed polynomials $f$ and $g$.
 
The dynamical variants of the Mordell-Lang conjecture described above
all pose questions about the intersection of forward orbits with
subvarieties.  Here we consider the related problem of integral points
in forward orbits. The main theorem of this paper is the following,
which may be thought of as a two-parameter version of \cite[Theorem
A]{SilSiegel}.
 
\begin{theorem}\label{main}
Let $K$ be a number field and $S$ a finite set of primes in $K$.  Let
$f: \bP_1 \lra \bP_1$ be a rational function with degree $d \geq 2$
that is not conjugate to a powering map $x^{\pm d}$, and let $u,
w \in \bP_1(K)$ be points that are not preperiodic for
$f$. Then the set of $(m,n) \in \bN^2$ such that $f^{\circ m}(u)$
is $S$-integral relative to $f^{\circ n}(w)$  is finite and effectively
computable.
\end{theorem}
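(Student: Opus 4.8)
The plan is to recast the statement as one about integral points on the surface $\bP_1\times\bP_1\setminus\Delta$ ($\Delta$ the diagonal), to dispose of a near-diagonal region by the one-parameter theorem, and on the rest to apply an effective version of Runge's method after replacing $\Delta$ by its preimage under a fixed high iterate of $f\times f$; this last substitution is what manufactures the many components at infinity that Runge needs. Since enlarging $S$ only weakens the conclusion, we may and do assume $S$ contains all places of bad reduction of $f$ (a fixed finite set that absorbs the error terms below). Then ``$f^{\circ m}(u)$ is $S$-integral relative to $f^{\circ n}(w)$'' is, up to a bounded and uniform error, the statement that $\bigl(f^{\circ m}(u),f^{\circ n}(w)\bigr)$ is an $S$-integral point of $\bP_1^2\setminus\Delta$; as the relevant local distance function is symmetric we may also assume $m\le n$.

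Fix an integer $k\ge 1$, to be chosen in terms of $\#S$. First dispose of the pairs with $m<k$: for each such $m$ the point $\alpha:=f^{\circ m}(u)\in\bP_1(K)$ is fixed, and — lying in the forward orbit of the non-preperiodic point $u$ — is not a totally ramified fixed point of any iterate of $f$, so a conjugate of \cite[Theorem~A]{SilSiegel} in its effective form (Section~5) leaves only finitely many, effectively computable, $n$ with $f^{\circ n}(w)$ $S$-integral relative to $\alpha$. So assume $k\le m\le n$. Setting $w'=f^{\circ(n-m)}(w)$ and $F=f\times f$, we have $\bigl(f^{\circ m}(u),f^{\circ n}(w)\bigr)=F^{\circ m}(u,w')$, and because neither $u$ nor $w$ is preperiodic the map $(m,n)\mapsto\bigl(f^{\circ(m-k)}(u),f^{\circ(n-k)}(w)\bigr)$ is injective on $\{m,n\ge k\}$.

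The pair $\bigl(\bP_1^2,\Delta\bigr)$ is useless for Runge, $\Delta$ being irreducible; the idea is to pull back along the \emph{fixed} morphism $F^{\circ k}$. By functoriality of local heights — whose error for $F^{\circ k}$ is supported on bad places of $f$, hence lies in $S$ — the point $F^{\circ m}(u,w')$ is $S$-integral with respect to $\Delta$ if and only if $F^{\circ(m-k)}(u,w')=\bigl(f^{\circ(m-k)}(u),f^{\circ(n-k)}(w)\bigr)$ is $S$-integral with respect to
\[
D^{(k)}:=(F^{\circ k})^{*}\Delta=\bigl\{(x,y)\in\bP_1^2:\ f^{\circ k}(x)=f^{\circ k}(y)\bigr\},
\]
with an error uniform in $(m,n)$. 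Here $D^{(k)}=\Delta\cup Z\cup F^{-1}(Z)\cup\cdots\cup F^{-(k-1)}(Z)$ with $Z$ the closure of $\{(x,y):f(x)=f(y),\,x\ne y\}$, which is nonempty since $\deg f\ge 2$; as the $D^{(j)}$ form a strictly increasing chain of curves (their bidegrees grow), $D^{(k)}$ has at least $k+1$ distinct irreducible components, and every component is ample (it has positive bidegree in each factor, since $D^{(k)}$ has no vertical or horizontal component). This is exactly where the hypothesis is used: if $f$ is conjugate to $x^{\pm d}$ then $\{f^{\circ k}(x)=f^{\circ k}(y)\}$ is a union of $\sim d^k$ lines all passing through the two totally ramified fixed points, so the number of components through a single point grows as fast as the total number of components and Runge's condition can never be met; the substance of ``$f$ not conjugate to $x^{\pm d}$'' is the complementary fact that for every other $f$ no point of $D^{(k)}$ lies on more than a bounded number $M=M(f)$ of its components.

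Now choose $k$ with $k+1>M\cdot\#S$. The modern effective form of Runge's method for integral points on surfaces (Levin; Bombieri--Gubler) then applies to $\bP_1^2\setminus D^{(k)}$: with ample components, at most $M$ of them through any point, and more than $M\cdot\#S$ of them in all, one gets not merely non-Zariski-density but finiteness — the set of $S$-integral points of $\bP_1^2\setminus D^{(k)}$ is finite and effectively computable. Every pair $(m,n)$ with $k\le m\le n$ for which $f^{\circ m}(u)$ is $S$-integral relative to $f^{\circ n}(w)$ produces, via the reduction above, such an $S$-integral point, and by injectivity distinct pairs produce distinct points; hence only finitely many — and effectively computable — such pairs. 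Together with the symmetric region $k\le n\le m$ and the region $\min(m,n)<k$ this proves the theorem. The genuine difficulty is the general-position input isolated above: the uniform bound $M(f)$ on the number of components of $D^{(k)}$ meeting at a point, valid whenever $f$ is not conjugate to $x^{\pm d}$; the maps demanding the most care are the other special ones — Latt\`es and Chebyshev maps — for which the curves $F^{-j}(Z)$ can behave atypically. For a merely ineffective proof one can instead invoke a well-known theorem of Vojta on non-density of integral points, applied to $\bP_1^2\setminus D^{(k)}$, which for $k\gg 0$ is a surface of log general type with many boundary components; the finitely many exceptional curves thrown up are then cleared away by Siegel's theorem, and in the horizontal/vertical cases by the one-parameter result, at the price of effectivity.
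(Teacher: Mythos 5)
Your architecture matches the paper's in outline (translate to integral points on $\bP_1^2$ relative to the pullback of the diagonal under a high iterate of $f\times f$, dispose of $\min(m,n)$ small by the effective one-parameter theorem, run Runge on the rest), but the step you yourself flag as ``the genuine difficulty'' is resolved by an assertion that is false. You claim that for $f$ not conjugate to $x^{\pm d}$ there is a bound $M(f)$, independent of $k$, on the number of components of $D^{(k)}$ through any point. Take $f(x)=x^2-1$ (not conjugate to $x^{\pm 2}$, since it has only one exceptional point). The critical point $0$ has period $2$ and is not exceptional, and the local multiplicity of $\{f^{\circ 2j}(x)=f^{\circ 2j}(y)\}$ at $(0,0)$ is $2^{j}$, strictly increasing in $j$; hence $(0,0)$ lies in the support of $B_i=D_i-D_{i-1}$ --- equivalently, of your piece $F^{-(i-1)}(Z)$ --- for infinitely many $i$, so the number of boundary components through $(0,0)$ grows with $k$. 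The same happens for any $f$ possessing a periodic, non-exceptional critical point; this is precisely the content of Lemma~\ref{multiple}, which gives the general-position bound $2d-1$ \emph{only away from} such points. Consequently no choice of $k$ makes Levin's hypothesis (at most $M$ components through a point, with $M\cdot\#S$ less than the total number) hold, and the Runge step collapses. The hypothesis ``not conjugate to $x^{\pm d}$'' does not buy general position; in the paper it is used to ensure at most one exceptional point (so coordinates can be normalized) and, via the appendix of \cite{BGKTZ}, to handle periodic curves at the end.

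The missing idea --- the technical core of Section~\ref{dege} --- is the quantitative repair at these bad points: after normalizing so that periodic critical points are fixed and the exceptional point (if any) is $\infty$, a non-exceptional fixed critical point $c$ has ramification index $r<d$, so the multiplicity of $B_N$ at $(c,c)$ grows like $r^{N}$, which is $o(\deg B_N)=o(d^N-d^{N-1})$ (Lemma~\ref{lem1}; the point $(\infty,\infty)$ is treated separately in Lemma~\ref{near_infty}). One then replaces the function $\beta_N$ cutting out $B_N$ by $\phi_N=\alpha_N/\beta_N$, where $\alpha_N$ has degree less than $\deg\beta_N$ and vanishes to order about $r^{N}$ at the finitely many bad points (Lemma~\ref{lem2}); this bounds $|\phi_N|_v$ near the bad points for all but one $N$, and the pigeonhole argument of Proposition~\ref{s0} then closes the Runge step. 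Two further corrections: even where general position holds, Runge on a surface yields degeneracy (containment in an effectively computable curve), not the finiteness you assert, so one still needs the paper's endgame --- identifying the $(f,f)$-periodic components of that curve and applying Theorem~\ref{silverman-eff} and the appendix of \cite{BGKTZ} to them; and the problematic maps are those with periodic critical points, not the Latt\`es or Chebyshev maps you single out, whose finite critical points are strictly preperiodic.
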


Clearly the set $(m,n)$ such that $f^{\circ m}(u)$
is $S$-integral relative to $f^{\circ n}(w)$ depends on $u$ and $w$.
It is possible, however, to prove an effective degeneracy result for integral
points depending only on $f$, $S$, and $K$.  This is stated in
Theorem~\ref{eff-degen}, which is phrased in terms of the
$S$-integrality of points $(f^{\circ m}(u), f^{\circ n}(w))$ relative
to inverse images of the diagonal in $\bP_1^2$.   
\bigskip

The outline of the paper is as follows.    
In Section \ref{nota}, after introducing some notation, we give some equivalent notions of integrality. This will reduce  our problem to the study of integral points on the complement in $\bP_1^2$ of suitable divisors.
Then, we show that a noneffective version of Theorem
\ref{main} can be obtained very quickly by combining
\cite[Appendix]{BGKTZ} with \cite[Theorem 2.4.1]{PV}; this is
Theorem~\ref{ineff-fin}.  

In Section \ref{dege}, we prove
Theorem~\ref{eff-degen}, an effective degeneracy result for
$K$-rational in $\bP_1^2$ that are $S$-integral relative to inverse
images of the diagonal under $(f,f)$.  

The technique here  
originates from  Runge's theorem \cite{Runge}; Runge treated only the case of curves, 
but see  \cite[Section 9.6]{BG} and
\cite{LevinRunge} for a modern account and higher dimensional generalizations. 
Since inverse images of the
diagonal under $(f,f)$ have several components, one can construct many rational
functions $\psi$ whose pole divisors are supported on these inverse images.  The
main difficulty is dealing with the points at which many
components of these pole divisors meet; this can be overcome by
introducing rational functions
that vanish to a high degree at these intersection points.  Then,  in
Section \ref{fini}, we use Theorem~\ref{eff-degen} and some simple
facts about periodic curves to finish the proof of
Theorem~\ref{main}.  We end with a few remarks about what happens when
$f$ is conjugate to a powering map or $u$ or $w$ is preperiodic.   

\section{Notation}\label{nota}

Let $\bP_n$ denote the projective $n$-space and write $\bP_n^m$ for
$m$-fold Cartesian product of $\bP_n$.  We fix projective coordinates
$[z_1: \dots : z_n]$.  When convenient, we regard $\bP_1(K)$ as
$K\cup \{\infty\}$ and work in affine coordinates.  

Let $P=[x_0:\dots:x_n]$ and $Q=[y_0:\dots:y_n]$ be points in $\bP_n$, and
define 
$$ 
\|P \|_v = \left\{ \begin{array}{ll}
 \max(|x_0|_v,\dots, |x_n|_v) &\mbox{ if $v$ is non-archimedean} \\ \\
 \sqrt{|x_0|_v^2+\dots+|x_n|_v^2} &\mbox{ if $v$ is archimedean}.
       \end{array} \right.
$$  
Then the \emph{Chordal Distance} on $\bP_n$ is given by
$$\Delta_v(P,Q) = \frac{\max_{0\le i\le j \le n}\{ |x_iy_j - x_jy_i|_v \}
}{\|P\|_v \|Q\|_v}.$$
This definition is independent of the choice of projective coordinates for $P$
and $Q$, and satisfies $0 \le \Delta_v(P,Q)\le 1$.  Additionally,  $\Delta_v$
changes by a uniformly bounded factor under an automorphism of $\bP_n$. 
Viewing $\bP_1^2$ as a projective subvariety of $\bP_3$ via the Segre embedding,
we see that $P=(x,y) \in \bP_1^2$ satisfies $\Delta_v(P,(\infty, \infty)) <
\delta$ if and only if $|x|_v > 1/\delta$ and $|y|_v>1/\delta$.

Let $f(x)=p(x)/q(x)$, with $p(x), q(x)$ coprime,  be a rational function of
degree $d$, and write $f^{\circ n}(x)=p_n(x)/q_n(x)$ where $p_n(x)$ and
$q_n(x)$ are also coprime.  The homogenization of $f^{\circ n}$ gives the
rational function $F^{\circ n}([x_0:x_1])=[P_n(x_0,x_1):Q_n(x_0,x_1)]$ on
$\bP_1$.  Let
$D_n$ be divisor of zeros for $P_n(x_0,x_1) Q_n(y_0,y_1) - P_n(y_0,y_1) Q_n
(x_0,x_1)$ and $B_i := D_i - D_{i-1}$. We note that $B_i$ is an effective divisor. In fact,
put  $x = x_0/x_1$ and $y = y_0/y_1$ and  let  
\[\beta_i = \frac{f^{\circ i}(x) - f^{\circ i}(y)}{f^{\circ (i-1)}(x) - f^{\circ
(i-1)}(y)}.\]
Then $B_i$ is the zero divisor of $\beta_i$, in particular it is effective.  

Having fixed coordinates on $\bP_1$, we have models
$(\bP_1)_{\fo_K}$ and $(\bP_1^2)_{\fo_K}$ for $\bP_1$
and $\bP_1^2$ over the ring of integers $\fo_K$  for a number field
$K$.  Then, for any finite set of places of $K$ including all the
archimedean places of $K$, we may define $S$-integrality in the usual
ways.  We say that a point $Q \in \bP_1(K)$ is $S$-integral with respect to
a point $P \in \bP_1(K)$ if the Zariski closures of $Q$ and $P$ in
$(\bP_1){\fo_K}$ do not
meet over any primes $v \notin S$; similarly, we say that a point $Q
\in   \bP_1^2(K)$ is $S$-integral with respect to a subvariety $V$ of
$\bP_1^2$, defined over $K$, if  if the Zariski closures of $Q$ and $V$ in
$(\bP_1^2){\fo_K}$ do not meet over any primes $v \notin S$.

Note that the diagonal $D_0$ in $\bP_1^2$ is defined by the equation $x_0
y_1 - y_0 x_1 = 0$. So, more concretely, we say that $([a_0:a_1],
[b_0:b_1])$ is $S$-integral relative to the diagonal $D_0$ if
\begin{equation}\label{weil}
 |a_0 b_1 - a_1 b_0|_v \geq \max( |a_0|_v, |a_1|_v) \cdot \max(
 |b_0|_v, |b_1|_v)
\end{equation}
for all $v \notin S$.  We say $[a_0:a_1] \in \bP_1(K)$ is $S$-integral relative
to $[b_0:b_1]
\in \bP_1(K)$ if $([a_0:a_1], [b_0:b_1])$ is $S$-integral relative to
$D_0$.  This definition of integrality is consistent with the previous one
given that involved models for $\bP_1$ and $\bP_1^2$.  
Hence, given two points $P,Q\in\bP_1(K)$,  

{\it $P$ is integral with respect to $Q$ if and only if the pair $(P,Q)\in\bP_1^2(K)$ is integral with respect to the diagonal.}
\smallskip

We may suppose, after enlarging our set of places $S$,  that our rational
function $f$ has good reduction at all
primes outside of $S$, that is that $p$ and $q$ have no common root at
any place outside of $S$.   Then we see that $([a_0:a_1],
[b_0:b_1])$ is $S$-integral relative to $D_n$ if 
\begin{equation*}
\begin{split} 
| P_n(a_0,a_1) & Q_n(b_0,b_1) -   P_n(b_0, b_1) Q_n(a_0,a_1)|_v \\
&  \geq \max( |a_0|_v, |a_1|_v)^{d^n} \cdot \max(
 |b_0|_v, |b_1|_v)^{d^n} 
\end{split}
\end{equation*}
for all $v \notin S$.  Note that from the definition above, it is clear that if a point is
$S$-integral relative to $D_n$ then it is also $S$-integral
relative to $D_m$ for any $m \leq n$ (as one would expect given that
the support of $D_m$ is contained in the support of $D_n$ when $m \leq
n$).  

Furthermore, if $S$ contains all the places of bad reduction for $f$, we have
\begin{equation}\label{functoriality}
\begin{split}
& \text{$([a_0:a_1], [b_0:b_1])$ is $S$-integral relative to $D_n$} \\
 & \quad \quad  \quad \text{ $\Longleftrightarrow$ $(f^{\circ n}([a_0:a_1]),
f^{\circ n}([b_0:b_1]))$ is $S$-integral relative to $D_0$. } 
\end{split}
\end{equation}

We will often use coordinates $(x,y)$ on $\bP_1^2$ where
$x = x_0/x_1$ and $y = y_0/y_1$ for projective coordinates
$([x_0:x_1],[y_0:y_1])$.  We write $(\infty, \infty)$ for the point
$([1:0],[1:0])$. \medskip

We say that point $z$ is {\it exceptional} for $f$ if $z$ is a totally
ramified fixed point of $f^{\circ 2}$.

\section{Ineffective finiteness}\label{ineff}

Applying a result of Vojta, we have the following.  

\begin{theorem}\label{ineff-degen}
  Let $K$ be a number field and $S$ a finite set of primes in $K$.
  Let $f: \bP_1 \lra \bP_1$ be a rational function of degree $d \geq
  2$. Then the set of points in $\bP_1^2(K)$ that are $S$-integral
  relative to $D_4$ lies in a proper closed subvariety $Z$ of $\bP_1^2$.
\end{theorem}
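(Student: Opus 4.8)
The plan is to apply Vojta's theorem on integral points on subvarieties of semiabelian varieties — or rather its consequence for general varieties via the structure of the quasi-Albanese map, as formulated in \cite[Theorem 2.4.1]{PV} — to the quasi-projective surface $X = \bP_1^2 \setminus \operatorname{Supp}(D_4)$. The key point is that $D_4$ has enough components, arranged in sufficiently general position, that the logarithmic irregularity and more importantly the logarithmic Kodaira dimension of $X$ are large enough to force all $S$-integral points (equivalently, all $S$-integral points of the affine model) to be non-Zariski-dense. Concretely, $D_4 = D_0 + B_1 + B_2 + B_3 + B_4$, where $D_0$ is the diagonal and each $B_i$ is the effective zero divisor of $\beta_i = (f^{\circ i}(x) - f^{\circ i}(y))/(f^{\circ (i-1)}(x) - f^{\circ (i-1)}(y))$; moreover $D_i$ has bidegree $(d^i, d^i)$, so these are genuinely many curves of growing degree on $\bP_1^2$.

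First I would invoke the computation from \cite[Appendix]{BGKTZ}, which (as the introduction of this paper indicates) establishes exactly the geometric input needed: that the complement of $D_4$ in $\bP_1^2$ has the properties — large logarithmic Kodaira dimension, or more precisely the hypotheses on the quasi-Albanese variety and the boundary divisor — required to run Vojta's theorem \cite[Theorem 2.4.1]{PV}. Second, having that geometric input in hand, Vojta's theorem yields directly that any set of $S$-integral points on $X$ is contained in a proper Zariski-closed subset; since the notion of $S$-integrality relative to $D_4$ used in Section \ref{nota} is precisely integrality with respect to the model complementing $\operatorname{Supp}(D_4)$ (after enlarging $S$ to include the finitely many primes of bad reduction, which does not affect the conclusion), this set $Z$ is the desired proper closed subvariety of $\bP_1^2$.

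The main obstacle — indeed essentially the whole content beyond citation — is verifying the geometric hypotheses of Vojta's theorem for $\bP_1^2 \setminus \operatorname{Supp}(D_4)$: one must check that the boundary divisor $D_4$, despite its components possibly being singular or meeting each other, still contributes enough to the logarithmic cotangent bundle. This is where the specific structure of the $B_i$ matters: the factorization $f^{\circ i}(x) - f^{\circ i}(y) = \prod_{j \le i} \beta_j \cdot (x - y)$ (up to units) and the fact that $f$ has degree $\ge 2$ guarantee that $D_4$ genuinely has many irreducible components with the right intersection behavior. Since the introduction explicitly says this step is ``obtained very quickly by combining \cite[Appendix]{BGKTZ} with \cite[Theorem 2.4.1]{PV},'' I would expect the paper to simply cite the Appendix of \cite{BGKTZ} for precisely this verification rather than reproduce it; accordingly my proof would be short, with the real work outsourced. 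The one genuinely ineffective ingredient is Vojta's theorem itself, which is why this theorem — unlike Theorem \ref{eff-degen} — carries no effectivity claim.
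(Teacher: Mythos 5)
Your overall strategy --- apply Vojta's \cite[Theorem 2.4.1]{PV} to $\bP_1^2$ with boundary divisor $D_4$ --- is the same as the paper's, but there is a genuine gap in how you propose to verify the hypothesis of that theorem, and the citation you lean on for the verification is the wrong one. The version of Vojta's theorem the paper uses is a purely numerical criterion: the $S$-integral points relative to a divisor $W$ on a nonsingular variety $V$ are not Zariski dense provided $W$ has at least $\rho + r + \dim V + 1$ irreducible components, where $\rho$ and $r$ are the ranks of $\Pic^0(V)$ and of the N\'eron--Severi group. For $V = \bP_1^2$ one has $\rho = 0$ and $r = 2$, so five components suffice, and $D_4$ has at least five because it contains the five divisors $B_0, \dots, B_4$ with strictly growing support ($D_i$ has bidegree $(d^i,d^i)$ with $d \geq 2$). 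That two-line count is the entire verification; no hypothesis about general position, singularities of the components, logarithmic Kodaira dimension, or the quasi-Albanese map needs to be checked, and your stated ``main obstacle'' is not actually an obstacle for this form of the theorem.

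More seriously, you outsource this (nonexistent) verification to the Appendix of \cite{BGKTZ}, which plays no role in Theorem~\ref{ineff-degen} at all. That appendix is invoked only later, in Corollary~\ref{ineff-fin}, to show that the $S$-integral points lying \emph{on} the degenerate subvariety produced by Vojta's theorem form a finite set --- and that step is where the hypothesis that $f$ is not conjugate to $x^{\pm d}$ enters, a hypothesis which Theorem~\ref{ineff-degen} does not even impose. The introduction's remark about ``combining \cite[Appendix]{BGKTZ} with \cite[Theorem 2.4.1]{PV}'' refers to the proof of the full ineffective finiteness statement, not to the degeneracy statement you were asked to prove. As written, your proof defers its only substantive step to a source that does not contain it; to repair it, replace the Kodaira-dimension discussion with the component count against $\rho + r + \dim V + 1 = 5$.
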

\begin{proof}
  The divisor $D_4$ has at least five irreducible components, since it
  contains $B_0, \dots, B_4$.  A theorem of Vojta \cite[Theorem
  2.4.1]{PV} asserts that for any divisor $W$ on a nonsingular variety
  $V$, the points in $V(K)$ that are $S$-integral points relative to $W$ are not
dense in $V$
  if $W$ has at least $\rho + r + \dim V + 1$ components, where
  $\rho$ and $r$ are the ranks of $\Pic^0(V)$ and the N\'eron-Severi
  group of $V$, respectively.  Since $\Pic^0(\bP_1^2)$ is trivial and
  $\bP_1^2$ has a N\'eron-Severi group of rank 2 (see \cite[Example
  6.6.1]{H}), it follows that the set of points in $\bP_1^2(K)$ that
  are $S$-integral relative to $D_4$ lies in a proper closed
  subvariety of $\bP_1^2$.
\end{proof}

\begin{cor}\label{strong1}
  Let $K$ be a number field and $S$ a finite set of primes in $K$.
  Let $f: \bP_1 \lra \bP_1$ be a rational function with degree $d \geq
  2$.  There is a proper closed subvariety of $Y$ of $\bP_1^2$ such
  that for any $u, w \in \bP_1(K)$, the subvariety $Y$ contains all but at most
  finitely many points
  $(f^{\circ m}(u), f^{\circ n}(w))$ for which $f^{\circ m}(u)$ is
  $S$-integral relative to $f^{\circ n}(w)$.
 \end{cor}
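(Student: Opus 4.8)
The statement is Corollary~\ref{strong1}, which should follow from Theorem~\ref{ineff-degen} together with the functoriality relation \eqref{functoriality} and the basic facts about orbits being non-preperiodic. Let me sketch how to derive it.

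First I would apply Theorem~\ref{ineff-degen} to obtain a proper closed subvariety $Z \subseteq \bP_1^2$ containing every $K$-point that is $S$-integral relative to $D_4$. The natural candidate for the subvariety $Y$ in the corollary is $Z$ itself (possibly after enlarging $S$ so that $f$ has good reduction outside $S$, which only shrinks the integral set and hence is harmless). Now suppose $(f^{\circ m}(u), f^{\circ n}(w))$ is $S$-integral relative to the diagonal $D_0$, i.e. $f^{\circ m}(u)$ is $S$-integral relative to $f^{\circ n}(w)$. The issue is that \eqref{functoriality} relates integrality relative to $D_n$ at a point to integrality relative to $D_0$ at its image under $(f^{\circ n}, f^{\circ n})$ — and here the two exponents $m$ and $n$ differ, so one cannot directly pull back.

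The fix is to use the monotonicity remark: a point that is $S$-integral relative to $D_n$ is also $S$-integral relative to $D_j$ for all $j \le n$. So for $(m,n)$ with, say, $m, n \ge 4$ (there are only finitely many pairs with $\min(m,n) < 4$, and for each such pair the orbit points range over a fixed finite set, contributing only finitely many exceptions — actually one must be slightly careful here, but since $u,w$ are not preperiodic the points $f^{\circ m}(u)$ are pairwise distinct, so this is genuinely a finiteness statement only after we handle the generic case), set $k = \min(m,n) \ge 4$. Then $(f^{\circ m}(u), f^{\circ n}(w)) = (f^{\circ k}(f^{\circ (m-k)}(u)), f^{\circ k}(f^{\circ (n-k)}(w)))$, and by \eqref{functoriality} the point $(f^{\circ (m-k)}(u), f^{\circ (n-k)}(w))$ is $S$-integral relative to $D_k$, hence relative to $D_4$ since $k \ge 4$. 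By Theorem~\ref{ineff-degen} this point lies in $Z$. But that tells us about $(f^{\circ (m-k)}(u), f^{\circ (n-k)}(w))$, not about $(f^{\circ m}(u), f^{\circ n}(w))$. So instead I would not reduce the exponents but rather argue in the forward direction: since $(f^{\circ(m-k)}(u), f^{\circ(n-k)}(w))$ is $S$-integral relative to $D_4 \supseteq D_0$, apply Theorem~\ref{ineff-degen} with $D_4$ replaced appropriately — actually the cleanest route is: for all $(m,n)$ with $m,n \ge 4$, writing the point as $(f^{\circ 4}(f^{\circ(m-4)}(u)), f^{\circ 4}(f^{\circ(n-4)}(w)))$ and using \eqref{functoriality} (with $n=4$) shows $(f^{\circ(m-4)}(u), f^{\circ(n-4)}(w))$ is $S$-integral relative to $D_4$, so lies in $Z$. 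Then take $Y = (f^{\circ 4}, f^{\circ 4})(Z)$, or rather its Zariski closure, which is again a proper closed subvariety of $\bP_1^2$ since $(f^{\circ 4}, f^{\circ 4})$ is a finite morphism and $Z$ is proper closed. Every point $(f^{\circ m}(u), f^{\circ n}(w))$ with $m, n \ge 4$ and the integrality property lies in this $Y$.

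**The main obstacle.** The only real subtlety is the bookkeeping around the finitely many pairs $(m,n)$ with $\min(m,n) < 4$: one must observe that these contribute only finitely many points $(f^{\circ m}(u), f^{\circ n}(w))$ and hence can simply be thrown into the exceptional set (they need not lie in $Y$, but the corollary allows finitely many exceptions). So the argument is: enlarge $S$ for good reduction; set $Y = \overline{(f^{\circ 4}, f^{\circ 4})(Z)}$ with $Z$ from Theorem~\ref{ineff-degen}; check $Y$ is a proper closed subvariety using finiteness of $(f^{\circ 4}, f^{\circ 4})$; for $m, n \ge 4$ use monotonicity of $D_j$-integrality plus \eqref{functoriality} to place the orbit point in $Y$; and absorb the finitely many pairs with $\min(m,n) < 4$ into the allowed finite exceptional set. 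I expect the verification that $Y$ is proper closed — i.e. that pushing forward a proper closed subvariety under the finite map $(f^{\circ 4}, f^{\circ 4})$ yields a proper closed subvariety — to be the one place needing a sentence of justification, though it is standard.
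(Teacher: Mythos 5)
Your treatment of the main case $m,n\ge 4$ is essentially the paper's: pull back via \eqref{functoriality} to see that $(f^{\circ(m-4)}(u),f^{\circ(n-4)}(w))$ is $S$-integral relative to $D_4$, hence lies in the $Z$ of Theorem~\ref{ineff-degen}. Your insistence on then taking $Y\supseteq\overline{(f^{\circ4},f^{\circ4})(Z)}$ is a correct and worthwhile precision (the paper's own wording elides this push-forward step), and the justification via finiteness of $(f^{\circ4},f^{\circ4})$ is fine. One small sign error: enlarging $S$ \emph{enlarges} the set of $S$-integral points rather than shrinking it; the reduction is still harmless, but for the opposite reason (degeneracy for the larger set implies it for the smaller).

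There are, however, two genuine gaps. First, your claim that ``there are only finitely many pairs $(m,n)$ with $\min(m,n)<4$'' is false: this set contains, e.g., all $(m,0)$, and when $u,w$ are not preperiodic these give infinitely many distinct points $(f^{\circ m}(u),w)$. These cannot simply be ``thrown into the exceptional set''; one must actually prove that only finitely many of them are $S$-integral relative to $D_0$. The paper does this with the one-parameter theorem \cite[Theorem A]{SilSiegel}: for each of the finitely many fixed points $f^{\circ n}(w)$ with $n\le 4$ (none exceptional, since $w$ is not preperiodic and exceptional points are preperiodic), only finitely many $m$ make $f^{\circ m}(u)$ integral relative to it, and symmetrically with the roles of $m$ and $n$ exchanged. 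Your hedging sentence acknowledges discomfort but supplies no argument; this is the missing ingredient. Second, the corollary is asserted for \emph{all} $u,w\in\bP_1(K)$, including preperiodic ones, where the integral set can genuinely be infinite (e.g.\ $w$ exceptional). The paper handles this by adjoining to $Y$ the horizontal and vertical lines through the finitely many $K$-rational preperiodic points of $f$ (a finite set by Northcott, independent of $u$ and $w$), so that the preperiodic case is absorbed into $Y$ outright. Your $Y$ does not contain these lines, and your argument silently assumes $u,w$ are not preperiodic.
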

\begin{proof}
  Let $Z$ be as in Theorem~\ref{ineff-degen}, let $z_1, \dots, z_e$
  be the set of preperiodic points of $f$ in $K$ (note that this set
  must be finite), and let 
\[ Y = Z \cup \left( \bigcup_{i=1}^e \bP_1 \times \{ z_i \}  \right)
\cup \left( \bigcup_{i=1}^e \{ z_i \} \times \bP_1  \right). \]
If $u$ or $w$ is preperiodic for $f$, then $(f^{\circ m}(u), f^{\circ
    n}(w)) \in Y$ for all $m,n$, so we may assume that neither $u$ nor $w$ is
  preperiodic.  

  Now, by \cite[Theorem A]{SilSiegel}, for any fixed $n$, there are at most
  finitely many $m$ such that $f^{\circ m}(u)$ is $S$-integral
  relative to $f^{\circ n}(w)$, because no $f^{\circ n}(w)$ is
  exceptional; likewise, there are at most finitely many $n$ such
  that$f^{\circ n}(w)$ is $S$-integral relative to $f^{\circ m}(u)$.
  Thus, there are at most finitely many $(m,n)$ with $\min(m,n) \leq 4$ such that
  $(f^{\circ m}(u), f^{\circ n}(w))$ is $S$-integral relative to
  $D_0$.  By \eqref{functoriality}, we see that if $m,n \geq 4$, then
  $(f^{\circ (m-4)}(u), f^{\circ (n-4)}(w))$ is $S$-integral relative
  to $D_4$ if and only if $(f^{\circ m}(u), f^{\circ n}(w)$ is
  $S$-integral relative to $D_0$.  Applying Theorem~\ref{ineff-degen},
  one sees then that the set of points of the form $(f^{\circ m}(w),
  f^{\circ n}(u))$ that are $S$-integral relative to $D_0$ is
  contained in $Y$.  
\end{proof}

\begin{cor}\label{ineff-fin}
  Let $K$ be a number field and $S$ a finite set of primes in $K$.
  Let $f: \bP_1 \lra \bP_1$ be a rational function with degree $d \geq
  2$ that is not conjugate to a powering map $x^{\pm d}$, and let
  $u, w \in \bP_1(K)$ be points that are not preperiodic for
  $f$. Then the set of $(m,n) \in \bN^2$ such that $f^{\circ
    m}(u)$ is $S$-integral relative to $f^{\circ n}(w)$ is
  finite.
\end{cor}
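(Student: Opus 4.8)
The plan is to feed the ineffective degeneracy of Corollary~\ref{strong1} into an analysis of which curves in $\bP_1^2$ can contain infinitely many points of the product orbit $\{(f^{\circ m}(u),f^{\circ n}(w))\}_{(m,n)\in\bN^2}$. Since $u$ and $w$ are not preperiodic, the map $(m,n)\mapsto(f^{\circ m}(u),f^{\circ n}(w))$ is injective, so it suffices to bound the number of pairs $(m,n)$. Corollary~\ref{strong1} provides a proper closed $Y\subseteq\bP_1^2$, depending only on $f,K,S$, containing all but finitely many of the relevant points; its finitely many zero-dimensional components each contribute at most one pair, and the vertical and horizontal lines through preperiodic points that appear in $Y$ carry no orbit point at all. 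One is thus reduced to showing: for every geometrically irreducible curve $C\subseteq\bP_1^2$, only finitely many $(m,n)$ have $(f^{\circ m}(u),f^{\circ n}(w))\in C$ with $f^{\circ m}(u)$ $S$-integral relative to $f^{\circ n}(w)$. Here one may replace $K$ by a finite extension over which $C$ is defined (this only enlarges the notion of $S$-integrality), and one may assume $C\ne D_0$, since a point of $D_0$ violates~\eqref{weil}.

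Two classes of $C$ are dealt with at once. If $C=\{P\}\times\bP_1$ is vertical, the set of pairs is empty unless $P=f^{\circ m_0}(u)$ for the unique such $m_0$; then $P$ is not exceptional (an exceptional point is preperiodic, $u$ is not), so, since $S$-integrality of a point relative to a point is the symmetric relation~\eqref{weil}, \cite[Theorem~A]{SilSiegel} applied to $w$ and $P$ gives finitely many $n$; horizontal curves are symmetric. If $C$ has positive geometric genus, or is rational with normalization meeting $D_0$ in at least three points, then $C\setminus(C\cap D_0)$ is an affine curve of negative Euler characteristic, and each orbit point on $C$ that is $S$-integral relative to $D_0$ is an $S$-integral point of it; Siegel's theorem \cite{Siegel} then applies.

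There remains the case that $C$ is rational, neither horizontal nor vertical, and meets $D_0$ in at most two points. With a normalization $\phi=(\phi_1,\phi_2)\colon\bP_1\to C$ and $e_i=\deg\phi_i\ge1$: if $(f^{\circ m}(u),f^{\circ n}(w))=\phi(t)$, comparing $h(\phi_i(t))=e_ih(t)+O(1)$ with $h(f^{\circ m}(u))=d^m\widehat{h}_f(u)+O(1)$ and $h(f^{\circ n}(w))=d^n\widehat{h}_f(w)+O(1)$, where $\widehat{h}_f(u),\widehat{h}_f(w)>0$, yields $\lvert\,d^m\widehat{h}_f(u)/e_1-d^n\widehat{h}_f(w)/e_2\,\rvert=O(1)$; as for all but finitely many relevant pairs both $m$ and $n$ are large, $d^{m-n}$ is forced to a fixed value, so $m-n$ equals a constant $\delta_0$. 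Thus, up to finitely many pairs, the relevant points form a tail $Q_n:=(f^{\circ(n+\delta_0)}(u),f^{\circ n}(w))$, $n\ge n_0$, lying on the single curve $C$; since $\Psi(Q_{n-1})=Q_n\in C$ for all $n>n_0$ and the $Q_n$ are Zariski-dense in $C$, it follows that $\Psi(C)\subseteq C$, hence $\Psi(C)=C$, where $\Psi:=f\times f$. This reduction to a ``diagonal tail'' on a $\Psi$-invariant curve is essentially the content of the Appendix of \cite{BGKTZ} combined with the height estimate above.

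Finally one must handle a $\Psi$-invariant $C$ of this type. Identifying $C$ with $\bP_1$, the map $\Psi|_C$ becomes a rational map $g$ of degree $d$ semiconjugated to $f$ by each $\phi_i$; once the easy cases above are removed this exhibits $C$ as the graph of a map $\sigma$ commuting with $f$, so that ``$Q_n$ is $S$-integral relative to $D_0$'' translates into ``$f^{\circ(n+\delta_0)}(u)$ is $S$-integral relative to the fixed-point set of $\sigma$''. That set has at most two points, each periodic for $f$; they cannot both be totally ramified fixed points of $f^{\circ2}$, for then $f$ would be conjugate to $x^{\pm d}$, which is excluded --- so at least one is non-exceptional, and \cite[Theorem~A]{SilSiegel} applied to $u$ concludes. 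I expect the periodic-curve step to be the real obstacle: precisely identifying the surviving $\Psi$-invariant (more generally $\Psi$-preperiodic) curves --- using, e.g., Ritt's classification of commuting rational maps and of the exceptional families --- dealing with the ramified possibilities $e_i\ge2$, and in every case exhibiting a non-exceptional point to which Silverman's theorem applies. This is exactly where the ``simple facts about periodic curves'' of Section~\ref{fini} and the exclusion of powering maps are indispensable.
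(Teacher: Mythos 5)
Your route diverges from the paper's at the decisive point: the paper's entire proof of this corollary is two sentences, namely take $Y$ from Corollary~\ref{strong1} and then quote the main theorem of the Appendix of \cite{BGKTZ}, which is precisely the statement that only finitely many points of the product orbit lying on a fixed proper closed subvariety can be $S$-integral relative to $D_0$ when $f$ is not conjugate to a powering map. What you have written is an attempt to reprove that imported theorem from scratch. Your reduction steps are essentially right and match the standard approach: irreducible components of $Y$; vertical and horizontal lines via the symmetry of \eqref{weil} and \cite[Theorem A]{SilSiegel}; Siegel for components whose complement of $D_0$ has negative Euler characteristic; and the canonical-height computation forcing $m-n$ to be eventually constant on a rational component meeting $D_0$ in at most two places of its normalization.

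But the proof is not complete. First, the deduction $\Psi(C)\subseteq C$ is unjustified: you only know that $Q_n\in C$ for $n$ in some infinite set $N$, not for all $n\ge n_0$; the remaining $Q_n$ may sit on other components of $Y$ (e.g.\ $Y=C\cup C'$ with $\Psi$ swapping $C$ and $C'$). The standard fix, which is what the paper does in the proof of Theorem~\ref{main}, is to use \eqref{functoriality} to propagate integrality backwards along the tail, take the Zariski closure $V$ of the whole tail, note $\Psi(V)\subseteq V$ so that $C$, being a component of $V$, is $\Psi$-preperiodic, and then pass to a periodic iterate of $C$ under a power of $\Psi$. Second, and fatally, the endgame is missing: a $\Psi$-periodic rational curve is a graph of a map commuting with $f$ only when one of the projection degrees $e_i$ equals $1$, and you give no argument when both $e_i\ge 2$; moreover, even in the graph case, if $C\cap D_0$ is a single point that is exceptional for $f$, your ``at least one fixed point is non-exceptional'' claim fails and Silverman's theorem does not apply --- note that the paper, in the corresponding step of the proof of Theorem~\ref{main}, disposes of exactly this case by once again invoking the Appendix of \cite{BGKTZ}. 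You correctly identify the periodic-curve classification as ``the real obstacle,'' but that obstacle is the entire content of the black box the paper cites, and the hypothesis that $f$ is not conjugate to $x^{\pm d}$ enters your argument only through that unproven step.
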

\begin{proof}
Let $Y$ be as in Theorem~\ref{strong1}.
 The set of points in $Y$ that are $S$-integral
relative to $D_0$ is finite by the main theorem of the Appendix of
\cite{BGKTZ}, since $f$ is not conjugate to a powering map $x^{\pm d}$.
\end{proof}


\section{Effective degeneracy} \label{dege}
We will now prove the following theorem.

\begin{theorem}\label{eff-degen}
  Let $K$ be a number field and $S$ a finite set of places of $K$
  including all the archimedean places.  Let
  $f: \bP_1 \lra \bP_1$ be a rational function of degree $d \geq 2$ that
  is not conjugate to a powering map $x^{\pm d}$. Then there is a
  computable integer $k_0$ such that the set of points in $\bP_1^2(K)$
  that are $S$-integral relative to $D_{k_0}$ lies
  in an effectively computable proper closed subvariety of $\bP_1^2$.
\end{theorem}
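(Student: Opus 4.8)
The plan is to implement a modern, higher-dimensional form of Runge's method on the open surface $U_k = \bP_1^2 \setminus \operatorname{Supp}(D_k)$, exploiting the fact that $D_k = B_0 + B_1 + \dots + B_k$ has many irreducible components once $k$ is large, so that the group of regular functions on $U_k$ is rich. Concretely, recall from Section~\ref{nota} that $\beta_i = (f^{\circ i}(x) - f^{\circ i}(y))/(f^{\circ (i-1)}(x) - f^{\circ (i-1)}(y))$ has zero divisor $B_i$; so functions of the form $\psi = \prod_i \beta_i^{a_i}$ with appropriately chosen exponents $a_i$ (some positive, some negative) will have poles supported only on $\operatorname{Supp}(D_k)$. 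For a point $P = ([a_0:a_1],[b_0:b_1]) \in \bP_1^2(K)$ that is $S$-integral relative to $D_k$, one gets an upper bound on $\prod_{v \in S} \max(1, |\psi(P)|_v)$ in terms of the heights of the points, because at every $v \notin S$ the $S$-integrality condition forces $|\psi(P)|_v$ to be bounded (this uses good reduction of $f$ outside $S$, which we may assume). If one can produce \emph{two} multiplicatively independent such functions $\psi_1, \psi_2$ whose combined pole behavior dominates — i.e. so that the divisor of poles, counted with the right multiplicities, exceeds what the height inequality can tolerate — then the $S$-integral points must satisfy a nontrivial algebraic relation, hence lie in a proper closed subvariety, which will be effectively computable since everything in sight (the $\beta_i$, the exponents, the resulting equations) is explicit.

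The first concrete step is to analyze the geometry of the components $B_i$ and their intersections inside $\bP_1^2$: how the $B_i$ meet each other, how they meet the lines at infinity $\{\infty\} \times \bP_1$ and $\bP_1 \times \{\infty\}$, and in particular to locate and bound the multiplicities at the points where several components pass through simultaneously. The second step is to choose the exponent vectors $(a_i)$ defining $\psi_1,\psi_2$ so that these functions are regular away from $\operatorname{Supp}(D_k)$ \emph{and} so that, near each "bad" intersection point, the order of vanishing (or the pole order) is controlled — this is where one introduces auxiliary functions vanishing to high order at the intersection points, as advertised in the introduction. The third step is the arithmetic estimate: translate $S$-integrality relative to $D_k$ into local inequalities at the places of $S$, sum over $S$, and combine with the product formula and standard height machinery (a Northcott-type or Runge-type inequality) to derive that if $k \ge k_0$ for an explicit $k_0$, then $\psi_1(P)$ and $\psi_2(P)$ satisfy a forced polynomial relation.

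I expect the main obstacle to be exactly the point flagged in the introduction: the components of $D_k$ do not meet transversally in a single point at infinity the way the three points at infinity do in Siegel/Runge on a curve, so naively $\psi_i$ acquires unwanted poles at the common intersection points of the $B_i$ and at the corners $(\infty,\infty)$ and its iterated preimages. Controlling these requires (a) a precise combinatorial/intersection-theoretic bookkeeping of how many components meet at each bad point and with what local multiplicities — which in turn rests on understanding the ramification of $f$ and the dynamics of the critical orbit — and (b) engineering the exponent vectors together with high-order vanishing correction functions so that the resulting $\psi_i$ are genuinely regular on $U_k$ while still having enough pole order along $D_k$ to beat the height inequality. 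Making $k_0$ \emph{effective} is then a matter of bookkeeping: since $B_i$ is the zero divisor of the explicit function $\beta_i$, every bound (number of components, intersection multiplicities, the threshold in the Runge inequality) can be written down in terms of $d$ and $k$, and the exceptional subvariety is cut out by the explicit relation produced in the argument. The hypothesis that $f$ is not conjugate to $x^{\pm d}$ enters to guarantee that the $\beta_i$ genuinely contribute new components (that $D_k$ keeps growing), ruling out the degenerate case where the dynamics collapses the geometry.
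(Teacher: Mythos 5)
Your overall framework does match the paper's: a higher-dimensional Runge argument on $\bP_1^2$ minus the support of $D_k$, rational functions with pole support on the individual components $B_i$, auxiliary functions vanishing to high order at the points where many components meet, and effectivity by explicitness. But the step that actually closes the argument is missing, and the mechanism you propose in its place would not work. The conclusion is not reached by producing \emph{two} multiplicatively independent functions whose ``combined pole behavior dominates'' a height inequality; it is reached by a pigeonhole over the places of $S$. One constructs a family of roughly $2d|S|+1$ functions $\phi_{M_1},\dots,\phi_{M_{2d|S|+1}}$ with pole divisors supported on distinct $B_{M_i}$. Lemma~\ref{multiple} (via the bound of $2d-2$ on the number of ramification points of $f$) shows that outside a controlled finite set of genuinely bad points, at most $2d-1$ of these pole divisors pass through any given point; hence for each $v\in S$ a point $Q$ can be $v$-adically close to the poles of at most $2d$ of the $\phi_{M_i}$, and the rest are $v$-adically bounded by an explicit constant (Lemma~\ref{lem3}). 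Since there are only $|S|$ places in $S$, some single $\phi_{M_\ell}$ is bounded at \emph{every} $v\in S$; combined with the bound at $v\notin S$ coming from $S$-integrality and good reduction, $\phi_{M_\ell}(Q)$ has absolutely bounded height, so $Q$ lies on one of finitely many level curves $\phi_{M_\ell}=\gamma$. Two functions cannot run this pigeonhole once $|S|\geq 3$, and this counting is exactly where the dependence of $k_0$ on $|S|$ --- invisible in your sketch --- enters. The points where $2d$ or more components do meet still defeat this count, and handling them requires both the high-order corrections you allude to (multiplying $1/\beta_N$ by a polynomial $\alpha_N$ vanishing to order $r_N$ at those points, where crucially $r_N/\deg\beta_N\to 0$) and an induction on $|S|$ to interleave the two regimes; neither the quantitative content of the correction nor the induction is supplied by ``engineering the exponent vectors.''

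A secondary but substantive misreading: the hypothesis that $f$ is not conjugate to $x^{\pm d}$ is not needed to make $D_k$ keep acquiring components --- it does so for powering maps as well, and indeed the ineffective degeneracy (Theorem~\ref{ineff-degen}) holds without this hypothesis. Its actual role is to ensure $f$ has at most one totally ramified periodic point, so that after normalizing coordinates and passing to an iterate, the periodic critical points relevant to the bad intersection locus have ramification index $r<d$; this strict inequality is what forces $r_N/(d^N-d^{N-1})\to 0$ and hence makes the correction polynomials $\alpha_N$ of degree smaller than $\deg\beta_N$ constructible. For $x^{\pm d}$ every $B_i$ passes through the corners with full multiplicity and the correction scheme collapses.
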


It suffices to show that there is a computable constant $A$ and a computable set
$\Psi$ of nonzero rational functions $\psi$ on $\bP_1^2$ such that all the
points in $\bP_1^2(K)$ that are $S$-integral relative to $D_{k_0}$ lie on a
curve of the form $\psi(x,y) = \gamma$ with $h(\gamma) < A$ and $\gamma \in K$
(note that there are finitely many such $\gamma$ and they can be effectively
determined).  The remainder of this section is devoted to constructing such a
set $\Psi$ of rational functions.  

Much of the proof is devoted to exploring the behavior of the functions $\psi$
near points that lie on the intersections of their pole divisors.
We begin with a lemma about these intersection points.

\begin{lemma}\label{multiple}
If the intersection of $2d$ distinct divisors $B_{m_0},\dots,
B_{m_{2d -1}}$ contains a point
$p=(\xi,\eta)$, then there are distinct $i$ and $j$ such that $f^{\circ
 m_ i - 1}(\xi) = f^{\circ m_i -1}(\eta)$ is a periodic critical point with
period dividing some $m_j - m_i$. 
\end{lemma}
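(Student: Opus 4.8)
The plan is to exploit the product structure of the $\beta_i$'s: recall that $B_i$ is the zero divisor of $\beta_i = \frac{f^{\circ i}(x) - f^{\circ i}(y)}{f^{\circ (i-1)}(x) - f^{\circ (i-1)}(y)}$, so a point $p = (\xi,\eta)$ lying on $B_m$ means either $f^{\circ m}(\xi) = f^{\circ m}(\eta)$ with the ``previous'' difference nonzero, or — at points where several $B_i$ meet — there is cancellation in the telescoping product $f^{\circ m}(x) - f^{\circ m}(y) = (x-y)\prod_{i=1}^{m}\beta_i$. First I would set $\xi_k := f^{\circ k}(\xi)$ and $\eta_k := f^{\circ k}(\eta)$ and record, for each $m$ among $m_0,\dots,m_{2d-1}$, the local behavior of $\beta_m$ at $p$. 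Since $p \in B_m$, either $p$ is a zero of $\beta_m$ or $p$ is a point where the numerator and denominator of $\beta_m$ both vanish (a base point of the rational function $\beta_m$); in the latter case $p$ may still lie on $B_m$ as a component of the indeterminacy resolution. The key numerical input is the degree: $f$ has degree $d$, so the fiber of $f$ over any point has at most $d$ points, counted without multiplicity, and exactly $d$ with multiplicity.

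The main step is a pigeonhole argument on the values $\xi_{m_i - 1}$. Because $p$ lies on $2d$ of the divisors, I claim that for at least two of the indices, say $m_i$ and $m_j$, we must have $f^{\circ(m_i-1)}(\xi) = f^{\circ(m_i-1)}(\eta) =: c$ and $f^{\circ(m_j-1)}(\xi) = f^{\circ(m_j-1)}(\eta)$, with moreover the point $c$ being critical and periodic. Here is the mechanism: whenever $f^{\circ k}(\xi) = f^{\circ k}(\eta)$ for some $k$, let $k_0$ be the smallest such $k$; then $\xi_{k_0 - 1} \neq \eta_{k_0-1}$ but $f(\xi_{k_0-1}) = f(\eta_{k_0-1})$, and for all $j \geq k_0$ one has $\xi_j = \eta_j$, so $p$ lies on $B_j$ for every $j > k_0$ with $\beta_j(p)$ of the form $0/0$. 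The order of vanishing of the telescoped product at $p$ along each branch is controlled by the ramification of $f$ at the common value $\xi_{k_0-1}$: if $f$ is unramified there, the difference $f^{\circ k_0}(x) - f^{\circ k_0}(y)$ vanishes only to first order transversally and the higher $B_j$ cannot all pass through $p$; to get $2d$ of them through $p$ we are forced into a situation where the orbit of the common value returns to a ramification point of $f$ — i.e. $c = \xi_{m_i-1}$ is a critical point whose forward orbit is finite and passes through critical points, and a counting of how the local intersection multiplicity $2d$ must be distributed forces $c$ to be periodic with period dividing $m_j - m_i$ for some $j \neq i$.

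The hard part will be making the local multiplicity bookkeeping precise: I need to show that if $c := f^{\circ(m_i-1)}(\xi)$ is \emph{not} a periodic critical point, then the number of indices $m$ with $p \in B_m$ is strictly less than $2d$. The cleanest route is probably to blow up $\bP_1^2$ at $p$ (or pass to local analytic branches through $p$) and compute, for each branch $\gamma$ through $p$, the intersection number $(\gamma \cdot B_m)_p$; summing the relation $D_m = D_{m-1} + B_m$ and using that $D_m$ is the zero divisor of $f^{\circ m}(x) - f^{\circ m}(y)$, whose local order along $\gamma$ grows by at most a factor $e$ (the ramification index of $f$ at the relevant orbit point) at each iteration, gives $\sum_{p \in B_m} 1 \leq$ (total local order of $D_N$ along all branches) which is bounded by $2d$ precisely when no ramification accumulates, i.e. when the common value is not a periodic critical point. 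Tracking the "$\leq 2d$ vs.\ $= 2d$" threshold and extracting from equality both the criticality of $c$ and the divisibility of the period by some $m_j - m_i$ is where the care is needed; everything else is the telescoping identity plus pigeonhole.
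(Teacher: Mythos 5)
Your sketch has the right raw ingredients---ramification and pigeonhole---but it leaves the decisive step unproved and aims the pigeonhole at the wrong quantity. The lemma follows from one clean observation that your write-up never isolates: since $\beta_n(x,y)=\beta_1\bigl(f^{\circ (n-1)}(x),f^{\circ (n-1)}(y)\bigr)$, a point $p=(\xi,\eta)$ lies on $B_n$ exactly when $\bigl(f^{\circ(n-1)}(\xi),f^{\circ(n-1)}(\eta)\bigr)$ lies on $B_1$. If in addition $p\in B_m$ for some $m<n$, then $f^{\circ m}(\xi)=f^{\circ m}(\eta)$, hence $f^{\circ(n-1)}(\xi)=f^{\circ(n-1)}(\eta)=:c$, and the diagonal point $(c,c)$ lies on $B_1\cap B_0$; since $D_1$ is cut out by $f(x)=f(y)$, the point $(c,c)$ has multiplicity $>1$ on $D_1$ precisely when $c$ is a critical point of $f$. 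So each of the $2d-1$ indices $m_1,\dots,m_{2d-1}$ (all but the smallest) produces a critical point $f^{\circ(m_k-1)}(\xi)$ of $f$. The pigeonhole is then against the \emph{global} Riemann--Hurwitz bound: $f$ has at most $2d-2$ critical points, so two of these $2d-1$ values coincide, i.e.\ $f^{\circ(m_i-1)}(\xi)=f^{\circ(m_j-1)}(\xi)$ for some $i\ne j$, which is exactly periodicity with period dividing $m_j-m_i$.

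Your proposal instead tries to extract both criticality and periodicity from a local intersection-multiplicity count at $p$ (blow-ups, branch-by-branch bookkeeping), and you explicitly defer the threshold argument as ``the hard part.'' That bookkeeping is unnecessary and, as set up, does not obviously close: the number $2d$ in the hypothesis counts \emph{distinct} divisors $B_{m_k}$ through $p$, not a local multiplicity, and the bound it must be played against is the global count $2d-2$ of critical points, which never appears in your sketch. One intermediate assertion is also false as stated: from $\xi_j=\eta_j$ for all $j\ge k_0$ it does not follow that $p$ lies on $B_j$ for every $j>k_0$; the point $p$ lies on the support of $B_j$ only when $\mathrm{mult}_p(D_j)>\mathrm{mult}_p(D_{j-1})$, which is exactly what fails when the orbit of the common value avoids critical points. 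So there is a genuine gap: without the functoriality $B_n=(f^{\circ(n-1)},f^{\circ(n-1)})^*B_1$ and the Riemann--Hurwitz count, the proposed argument does not establish the lemma.
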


\begin{proof}
First, we note that if $(c,c) \in B_1 \cap B_0$, then $(c,c)$ has multiplicity
greater than 1 on $D_1$.  Since $D_1$ is defined by the equation $f(x)
= f(y)$, this means that $c$ must be a ramification point of $f$.  Now
suppose that $(\xi, \eta)$ are in $B_m$ and $B_n$ for $m
< n$.  Then $(f^{\circ (n-1)}(\xi), f^{\circ (n-1)}(\eta)) \in B_1
\cap B_0$ so $f^{\circ (n-1)}(\xi) = f^{\circ (n-1)}(\eta) = c$ for
$c$ a ramification point of $f$.  Thus, if  $(\xi,\eta) \in B_{m_0}
\cap \dots \cap B_{m_{2d -1}}$ for $m_0 < m_1 < \dots < m_{2d-1}$,
then $f^{\circ (m_k - 1)}(\xi) = f^{\circ (m_k - 1)}(\eta)$ is a
ramification point of $f$ for $k=1, \dots, 2d -1$.  Since $f$ has at
most $2d - 2$ ramification points, we must have $f^{\circ (m_i -
  1)}(\xi) = f^{\circ (m_j - 1)}(\xi)$ for some $i \not= j$ with
$i,j\in\{1,\ldots,2d-1\}$. 
\end{proof}

Since $f$ has only finitely many ramification points, we may choose an $M$
such that the period of each periodic ramification point divides
$M$. Note that a point $x$ is periodic for $f$ if and only if it is
periodic for $f^{\circ M}$, since if $(f^{\circ M})^{\circ k}(x) = x$,
then   $f^{\circ Mk}(x) = x$ and if $f^{\circ k}(x) = x$, then
$(f^{\circ M})^{\circ k}(x) = (f^{\circ k})^{\circ M}(x) = x$.  Thus,
every  periodic ramification point of $f^{\circ M}$ is a periodic ramification point for $f$ and is a fixed point of $f^{\circ M}$.  
Proving Theorem~\ref{eff-degen}  for an iterate $f^{\circ M}$ of $f$
is equivalent to proving it for $f$ itself, so we may suppose, in view of the previous remark, that all the periodic ramification points of $f$ are fixed points for $f$.  Since $f$ is
not conjugate to a powering map and can therefore have at most one exceptional
point, then, after possibly switching coordinates and passing to another higher
iterate of $f$, we can make two further assumptions:  
\begin{enumerate}
\item If $f$ has an exceptional point, then that point is the point at
  infinity.  
\item If $f$ does not have an exceptional point, then the point at
  infinity is fixed and unramified.  
\end{enumerate}

\begin{remark}\label{rm}
The conditions above ensure that if $(\infty, y)$ or $(x, \infty)$
appears in the intersection of $2d$ or more divisors $B_i$, then $\infty$
must be an exceptional point for $f$. 
\end{remark}

We now note that if two rational functions take on a large value at a
point, then that point must lie near a point in the intersection of
the two rational functions' pole divisors.  The following lemma is
quite similar to \cite[Lemma 2.1]{LevinRunge}.

\begin{lemma}\label{lem3}
Let $\phi_1, \dots, \phi_m$ be rational function on $\bP_1^2$.  Let $Z_1, \dots,
Z_m$ denote their pole divisors.  Suppose
that for $i \not= j$, we have that $Z_i$ and $Z_j$ do not share a
component; let $\Pi$ denote the (finite) set of points $p$ such that
$p \in Z_i \cap Z_j$ for some $i \not= j$.  Then for any $\delta >
0$ and any place $v$ of $K$, there is a computable constant $\gamma_{\delta,
  v}$ such that if $R \in \bP_1^2(K_v) \setminus \bigcup_{i=1}^m Z_i(K_v)$ and
$\Delta_v(R,p) \geq
\delta$ for all $p \in \Pi$, then
$$ | \phi_i(R) |_v \leq \gamma_{\delta, v}$$
for at least $(m-1)$ of $i \in \{1, \dots, m\}$.   
 \end{lemma}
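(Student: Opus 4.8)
The plan is to argue that if $R$ fails the conclusion, then at least two of the functions $\phi_i$ are simultaneously large at $R$, and then show this forces $R$ to be $v$-adically close to a point of $\Pi$, contradicting the hypothesis $\Delta_v(R,p)\ge\delta$ for all $p\in\Pi$. So suppose for contradiction that there are distinct indices $i$ and $j$ with $|\phi_i(R)|_v$ and $|\phi_j(R)|_v$ both larger than some constant $C$ to be chosen. First I would reduce the problem to a purely local statement near the pole divisors: since $R\notin\bigcup Z_k$, each $\phi_k(R)$ is a well-defined element of $K_v\cup\{\infty\}$, and $|\phi_k(R)|_v$ being large is equivalent to $R$ being $v$-adically close to $Z_k$ (this is a standard compactness/Weil-function estimate on the projective variety $\bP_1^2$, with the implied constants computable in terms of the coefficients of $\phi_k$ and the place $v$).

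Next I would quantify ``close to $Z_k$'' via the chordal distance: there is a computable constant, depending on $\delta$ and $v$, so that $|\phi_k(R)|_v > C$ forces $\Delta_v(R, Z_k) < \varepsilon$, where $\varepsilon$ can be made as small as we like by taking $C$ large. Concretely one covers $\bP_1^2(K_v)$ by finitely many affine charts, writes $\phi_k = a_k/b_k$ in local coordinates with $b_k$ a defining polynomial for (a local piece of) $Z_k$, and uses that $a_k$ is bounded on the region where $\Delta_v(R,p)\ge\delta$ for all $p\in\Pi$ — here the hypothesis that $Z_i$ and $Z_j$ share no component matters, because it guarantees $Z_i\cap Z_j$ is finite, hence contained in $\Pi$, so away from $\Pi$ the divisors $Z_i$ and $Z_j$ are disjoint and $\phi_i,\phi_j$ cannot both blow up there. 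Thus if both $|\phi_i(R)|_v$ and $|\phi_j(R)|_v$ exceed $C$, then $R$ lies within chordal distance $\varepsilon$ of both $Z_i$ and $Z_j$, hence within $2\varepsilon$ (up to the bounded distortion of $\Delta_v$) of a point of $Z_i\cap Z_j\subseteq\Pi$; choosing $C$ so that $2\varepsilon < \delta$ contradicts $\Delta_v(R,p)\ge\delta$ for all $p\in\Pi$.

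To make the ``within $\varepsilon$ of $Z_i$ and within $\varepsilon$ of $Z_j$ implies within $2\varepsilon$ of $Z_i\cap Z_j$'' step precise and effective, I would work chart by chart: on a fixed affine chart, $Z_i$ is cut out by one equation and $Z_j$ by another, both polynomials with coefficients in $K$, and the statement becomes a B\'ezout-type / resultant estimate saying that a point at which two coprime polynomials are both small in absolute value must be close to a common zero. Such estimates are classical and effective (one can bound the resultant from below away from a neighborhood of the common zero locus, with constants computed from the coefficients). Then one takes $\gamma_{\delta,v}$ to be the maximum over the finitely many charts and the finitely many pairs $(i,j)$ of the resulting threshold $C$; for all but possibly one index $k$ we then have $|\phi_k(R)|_v\le\gamma_{\delta,v}$, since two ``large'' values would already give a contradiction.

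The main obstacle is the effectivity and uniformity in the last step: turning ``two coprime polynomials small at a point $\Rightarrow$ point near their common zeros'' into an explicit inequality with a computable constant, uniformly over the finitely many affine charts covering $\bP_1^2$ and uniformly handling the non-archimedean versus archimedean cases. This is exactly the technical heart of Runge-type arguments and is essentially \cite[Lemma 2.1]{LevinRunge}; I would either cite that lemma directly or reprove the needed estimate via resultants, being careful that the constant depends only on $\delta$, on $v$, and on the (fixed, computable) data of the $\phi_k$, and not on $R$. Everything else — the reduction to a local statement, the passage between $|\phi_k(R)|_v$ large and $\Delta_v(R,Z_k)$ small, and the bounded distortion of the chordal distance — is routine.
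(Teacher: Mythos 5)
Your proposal is correct and follows essentially the same route as the paper: effectively bound each $|\phi_i|_v$ on the set of points at distance at least some computable amount from its pole divisor, and observe that a point of the region $\{R : \Delta_v(R,p)\ge\delta \text{ for all } p\in\Pi\}$ cannot be simultaneously close to two distinct $Z_i$, $Z_j$, since $Z_i\cap Z_j\subseteq\Pi$. The one caveat is that your literal claim that being within $\varepsilon$ of $Z_i$ and of $Z_j$ forces the point within $2\varepsilon$ of $Z_i\cap Z_j$ need not hold (a tangential intersection degrades the exponent), but the resultant/effective-Nullstellensatz estimate you correctly fall back on supplies the effective, possibly nonlinear, dependence that the argument actually needs, which is exactly how the paper handles it.
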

\begin{proof}
  For each $i$ and any $\delta > 0$, one can effectively bound
  $|\phi_i|_v$ on the set of points that are at a distance of at least
  $\delta$ from the pole divisor $Z_i$.  To see this, note that it
  is clear when the poles are contained in fibers of $\bP_1^2$ and
  that otherwise there is an explicit embedding (a composition of
  Segre and $d$-uple embeddings) $\iota: \bP_1^2 \lra \bP_n$
  that sends $Z_i$ into a hyperplane $H$.
  Moreover, one can effectively compute a generator $h$ for the ideal sheaf
  of $H$ and a regular function $\psi$ on $\bP_n \setminus H$
  that restricts to $\phi_i$ on $\iota(\bP_1^2 \setminus Z_i)$, using the effective form of
  Hilbert's Nullstellensatz (see \cite{MW}).  One can then explicitly
  bound $\phi_i$ away from $Z_i$ in terms of the
  coefficients of $h$ and $\psi$.

Now, let $\cU$ denote the set of all points $R$ such that
$\Delta_v(R,p) \geq \delta$ for all $p \in \Pi$.  Then, for any
$\phi_i, \phi_j$ with $i \not= j$, there is a computable lower bound
$\epsilon_{i,j}$ on $\cU$ for the minimum of the distances of a point in
$\cU$ to $Z_i$ and $Z_j$.  Thus, there is some computable
$\gamma_{i,j}$ such that $\min ( | \phi_i(R) |_v , | \phi_j(R) |_v)
\leq \gamma_{i,j}$ for all $R \in \cU$.  Letting $\gamma_{\delta,
  v}$ be the maximum of these $\gamma_{i,j}$ gives the desired bound.
\end{proof}

In the case where $f$ has an exceptional point we need to treat this
point, which we have assumed is $(\infty, \infty)$, differently than
other points.  For an integer $i$, we let $d_i$ equal the degree of
$\beta_i(x,y)$, which is $d^{i} - d^{i-1}$ when $f$ has an exceptional
point.  We begin with a lemma about the behavior of the $\beta_i$ near
$(\infty, \infty)$ in the case where $\infty$ is an exceptional point
of $f$.  The idea here is simple: since any distinct $B_i$ and
$B_j$ meet transversally at $(\infty, \infty)$, at least one of
$|\beta_i(x,y)|_v$ and $|\beta_j(x,y)|_v$
can be bounded below in terms of its degree
near $(\infty, \infty)$.

\begin{lemma}\label{near_infty}
Let $p=(\infty, \infty)$ and suppose that $\infty$ is exceptional for
$f$.   Then, for any $M \not= N$, we have computable constants $C_{M,N,v}$ and a
computable $\delta > 0$ such that 
\begin{equation}\label{comp2}
\max \left( \frac{|\beta_M(x,y)|_v}{\max(|x|_v^{d_M},
    |y|_v^{d_M})},  
  \frac{|\beta_N(x,y)|_v}{\max(|x|_v^{d_N}, |y|_v^{d_N} )} \right) 
 \geq C_{M,N,v}
\end{equation} 
whenever $\Delta_v(P,p) < \delta$.
\end{lemma}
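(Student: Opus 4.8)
The plan is to reduce the estimate \eqref{comp2} to a statement about the top‑degree forms of $\beta_M$ and $\beta_N$, and then exploit that for $M\neq N$ these forms are coprime. First I would use the standing reductions: since $\infty$ is exceptional for $f$ and every periodic ramification point of $f$ has been arranged to be a fixed point, $\infty$ is a totally ramified fixed point of $f$ itself, so $f$ is a polynomial of degree $d$, each iterate $f^{\circ i}$ is a polynomial of degree $d^{i}$, and writing $f(u)-f(v)=(u-v)k(u,v)$ with $\deg k=d-1$ and substituting $u=f^{\circ(i-1)}(x)$, $v=f^{\circ(i-1)}(y)$ shows $\beta_i\in K[x,y]$ is a genuine polynomial of degree $d_i=d^{i}-d^{i-1}$. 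Comparing the homogeneous parts of top degree on both sides of $\beta_i\cdot\big(f^{\circ(i-1)}(x)-f^{\circ(i-1)}(y)\big)=f^{\circ i}(x)-f^{\circ i}(y)$ then yields that the degree‑$d_i$ homogeneous part of $\beta_i$ is
\[
\beta_i^{\mathrm{top}}(x,y)=\frac{c_i}{c_{i-1}}\cdot\frac{x^{d^{i}}-y^{d^{i}}}{x^{d^{i-1}}-y^{d^{i-1}}},
\]
where $c_j$ is the (computable) leading coefficient of $f^{\circ j}$. Note $\beta_i^{\mathrm{top}}$ is symmetric in $x,y$ and divisible by neither $x$ nor $y$.

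The key observation is that for $M\neq N$, say $M<N$, the forms $\beta_M^{\mathrm{top}}$ and $\beta_N^{\mathrm{top}}$ have no common zero on $\bP_1$: a zero of $\beta_M^{\mathrm{top}}$ is a point $[x:y]$ with $(x/y)^{d^{M}}=1$ but $(x/y)^{d^{M-1}}\neq1$, and since $d^{M}\mid d^{N-1}$ such a point automatically has $(x/y)^{d^{N-1}}=1$, hence is not a zero of $\beta_N^{\mathrm{top}}$. Dehomogenizing — legitimate since neither form is divisible by a coordinate — gives coprime one‑variable polynomials over $K$, so Bézout's identity (equivalently, resultant theory, which keeps everything effective) produces cofactors of bounded degree with computable coefficients. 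Evaluating the resulting identity on the set $\{\max(|x|_v,|y|_v)=1\}$ gives a computable $\gamma_{M,N,v}>0$ with
\[
\max\big(|\beta_M^{\mathrm{top}}(x,y)|_v,\ |\beta_N^{\mathrm{top}}(x,y)|_v\big)\ \ge\ \gamma_{M,N,v}
\]
there; for the finitely many $v\in S$ this is run directly, while at all remaining $v$ good reduction makes $\gamma_{M,N,v}$ (and hence the eventual $\delta$) uniform.

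Finally I would transfer this to the $\beta_i$ themselves near $p=(\infty,\infty)$. For $P=(x,y)$ with $|x|_v,|y|_v$ both large — equivalently $\Delta_v(P,p)<\delta$ for $\delta$ small, by the description of $\Delta_v(\cdot,(\infty,\infty))$ in Section~\ref{nota} — use the symmetry $\beta_i(x,y)=\beta_i(y,x)$ to assume $|x|_v\ge|y|_v$ and set $w=y/x$, so $|w|_v\le 1$. Then $\beta_i(x,xw)=\beta_i^{\mathrm{top}}(1,w)\,x^{d_i}+(\text{terms of lower order in }x)$, whose coefficients are polynomials in $w$ bounded on $|w|_v\le1$ by a computable constant; hence $|\beta_i(x,y)|_v\ge\big(|\beta_i^{\mathrm{top}}(1,w)|_v-C_i/|x|_v\big)\max(|x|_v,|y|_v)^{d_i}$ with $C_i$ computable. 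Applying the displayed bound of the previous paragraph to the direction $[1:w]$, choose $J\in\{M,N\}$ with $|\beta_J^{\mathrm{top}}(1,w)|_v\ge\gamma_{M,N,v}$; taking $\delta$ small enough in terms of $\gamma_{M,N,v}$, $C_M$ and $C_N$ forces $|\beta_J(x,y)|_v\ge\tfrac12\gamma_{M,N,v}\max(|x|_v,|y|_v)^{d_J}$, which is \eqref{comp2} with $C_{M,N,v}=\tfrac12\gamma_{M,N,v}$. The conceptual heart of the argument is identifying $\beta_i^{\mathrm{top}}$ and establishing the coprimality of distinct leading forms; the remaining work — the transfer estimate, and making the error terms and $\delta$ uniform over the polydisc $|w|_v\le1$ and over all places — is routine, and is the only part that requires careful bookkeeping.
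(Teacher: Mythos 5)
Your proposal is correct and follows essentially the same route as the paper's proof: identify the top-degree form of $\beta_i$ as a constant times $(x^{d^i}-y^{d^i})/(x^{d^{i-1}}-y^{d^{i-1}})$, observe that for $M\neq N$ the dehomogenized leading forms are coprime (the paper asserts this; you supply the divisibility argument), obtain an effective lower bound for the maximum of the two on the unit disc, and absorb the lower-order terms for $|x|_v,|y|_v$ large. The extra details you give (polynomiality of $\beta_i$, the explicit coprimality check, the two symmetric cases $|x|_v\lessgtr|y|_v$) are exactly the steps the paper compresses.
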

 
\begin{proof}
  Since $\infty$ is exceptional, $f$ is a polynomial.  Assume
  $(x,y)\in K_v^2$ with $|x|_v\leq |y|_v$. The largest degree homogeneous
  term in $\beta_N$ is a constant times
  $\frac{x^{d^N}-y^{d^N}}{x^{d^{N-1}}-y^{d^{N-1}}}$ and thus we have
\begin{equation}\label{big}
|\beta_N(x,y)|_v = |\epsilon y^{d_N}\tilde{\beta_n}(t)|_v+O(|y|_v^{d_N-1})
\end{equation}
for some constant $\epsilon$, where $t=x/y$ and $\tilde{\beta_n}$ is the monic
polynomial whose
roots are exactly the roots of unity of order dividing $d^N$ but not
$d^{N-1}$.   The $\tilde{\beta_n}$ are clearly pairwise coprime; thus, for
fixed $M \neq N$, we have
$$
\max(|\tilde{\beta_M}(t)|_v,|\tilde{\beta_N}(t)|_v)\geq W_{M,N,v} 
$$
for all $|t|_v \leq 1$ for some computable $W_{M,N.v}$.  Combining this with
\eqref{big} gives
$$\max \left( \frac{|\beta_M(x,y)|_v}{|y|_v^{d_M}},
  \frac{|\beta_N(x,y)|_v}{|y|_v^{d_N}} \right) \geq W_{M,N,v}  +
O(1/|y|_v)$$
for large $|y|_v$.  Similarly, when $|x|_v \geq |y|_v$, we obtain a bound
$$\max \left( \frac{|\beta_M(x,y)|_v}{|x|_v^{d_M}},
  \frac{|\beta_N(x,y)|_v}{|x|_v^{d_N}} \right) \geq W'_{M,N,v}  +
O(1/|x|_v)$$
for large $|x|_v$.  Combing these two bounds give \eqref{comp2} for
all $|x|_v, |y|_v > 1 / \delta$ for some computable $\delta > 0$.  When
$\Delta(x,y) < \delta$,
we have $|x|_v, |y|_v > 1/\delta$, so our proof is complete.  
\end{proof}

We now treat the case where $(\xi, \eta)$ is contained in the
intersection of at least $2d$ divisors $B_i$ but neither $\xi$ nor
$\eta$ is exceptional for $f$.  Again we take advantage of the fact
that the $B_i$ meet transversally at these $(\xi, \eta)$ to show that
for $i \not= j$, at least one of $|\beta_i(P)|_v$ and $|\beta_j(P)|_v$
can be bounded below in terms of the multiplicity of its pole divisor at
$(\xi, \eta)$ for points $P$ near $(\xi, \eta)$.

\begin{lemma}\label{lem1}
Let $p=(\xi,\eta) \not= (\infty,\infty)$
be contained in the intersection of $2d$ divisors $B_{m_0},\dots,
B_{m_{2d-1}}$.  Let $P=(\xi+t,\eta+u)$.  Then there is an increasing sequence $r_i$ with $\lim_{i \to \infty}
r_i/(d^i - d^{i-1}) = 0$ such that for any $M > N >
m_{2d -1}$, we have computable constants $C_{M,N,v}$ and a
computable $\delta > 0$ such that 
\begin{equation}\label{comp}
\max \left( \frac{|\beta_M(P)|_v}{\max(|t|_v^{r_M}|, |u|_v^{r_M})}),
  \frac{|\beta_N(P)|_v}{\max(|t|_v^{r_N}|, |u|_v^{r_N})} \right) \geq C_{M,N,v}
\end{equation} 
whenever $\Delta_v(P,p) < \delta$.
\end{lemma}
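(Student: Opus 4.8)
The plan is to imitate the argument of Lemma~\ref{near_infty}, replacing "top-degree homogeneous term at $(\infty,\infty)$" by "lowest-order Taylor term at $p=(\xi,\eta)$". First I would fix local coordinates $t = x-\xi$, $u = y-\eta$ vanishing at $p$ (using affine coordinates, which is legitimate since $p\neq(\infty,\infty)$ and we may also assume $\xi,\eta\neq\infty$ after an automorphism, absorbing the bounded distortion of $\Delta_v$). For each $i$, let $r_i$ denote the multiplicity $\mathrm{mult}_p(B_i)$ of the effective divisor $B_i$ at $p$; equivalently, writing $\beta_i$ in the local ring at $p$, the order of vanishing of $\beta_i$ along the exceptional-divisor-type filtration — concretely, $r_i$ is the smallest total degree of a monomial appearing in the Taylor expansion of $\beta_i(\xi+t,\eta+u)$. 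Since each $B_i$ for $i\le m_{2d-1}$ passes through $p$ and $B_i$ for $i>m_{2d-1}$ does \emph{not} (by Lemma~\ref{multiple}, once $p$ lies on $2d$ of the $B_i$ the relevant ramification point is fixed, so no further $B_i$ can pass through $p$: here I would need to check that the $2d$ divisors through $p$ exhaust the list, which is exactly what Lemma~\ref{multiple} and the assumption "periodic ramification points are fixed" give us), the multiplicities $r_i$ stabilize and in fact $r_i$ is \emph{bounded} for large $i$ — so $r_i/(d^i-d^{i-1})\to 0$ trivially. Actually the stated hypothesis only needs $r_i$ increasing with $r_i/d_i\to 0$, so I would take $r_i$ to be the honest local multiplicity and note it is eventually constant (hence certainly can be taken nondecreasing after adjusting early terms), which forces the limit to be $0$.

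The core estimate: write the Taylor expansion $\beta_i(\xi+t,\eta+u) = P_i(t,u) + (\text{higher order})$, where $P_i$ is the nonzero homogeneous leading form of degree $r_i$. The key geometric input is that $B_M$ and $B_N$ \emph{meet transversally} at $p$ for $M\neq N$ (both beyond $m_{2d-1}$) — this is the transversality claim the paper flags as "the $B_i$ meet transversally at these $(\xi,\eta)$". I would verify this by the same mechanism as in Lemma~\ref{multiple}: pushing forward by $f^{\circ(m-1)}$ reduces to the transversality of $B_1$ and $B_0$ at a ramification point, and for $M,N$ large beyond all the $m_j$ the leading forms $P_M$, $P_N$ share no common linear factor. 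Consequently, on the unit "circle" $\max(|t|_v,|u|_v)=1$, normalizing $\widetilde P_i := P_i/\max(|t|_v,|u|_v)^{r_i}$, the forms $\widetilde P_M$ and $\widetilde P_N$ cannot both be small: there is a computable $W_{M,N,v}>0$ with $\max(|\widetilde P_M(t,u)|_v,|\widetilde P_N(t,u)|_v)\ge W_{M,N,v}$ for all $(t,u)$ with $\max(|t|_v,|u|_v)\le 1$ (by coprimality/transversality, exactly as $\tilde\beta_M,\tilde\beta_N$ coprime in Lemma~\ref{near_infty}). Then for $\Delta_v(P,p)<\delta$, i.e. $\max(|t|_v,|u|_v)<\delta'$ small, the higher-order terms in each $\beta_i$ are dominated: $|\beta_i(P)|_v = |P_i(t,u)|_v + O(\max(|t|_v,|u|_v)^{r_i+1})$, giving
\[
\max\!\left( \frac{|\beta_M(P)|_v}{\max(|t|_v,|u|_v)^{r_M}},\ \frac{|\beta_N(P)|_v}{\max(|t|_v,|u|_v)^{r_N}} \right)\ \ge\ W_{M,N,v} - O(\delta'),
\]
which is $\ge C_{M,N,v} := W_{M,N,v}/2$ once $\delta$ is chosen small enough (computably, since the implied constants come from explicit coefficients of $\beta_M,\beta_N$). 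Replacing $\max(|t|_v,|u|_v)^{r_i}$ by $\max(|t|_v^{r_i},|u|_v^{r_i})$ changes things only by a bounded factor ($\le$ a power of $2$ at archimedean places, equality non-archimedean), so this yields \eqref{comp}.

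The main obstacle is the transversality claim in the non-archimedean/higher-multiplicity setting: I need that for $M,N$ strictly larger than all $m_j$, the leading forms $P_M$ and $P_N$ of $\beta_M,\beta_N$ at $p$ have no common factor, and that the multiplicities $r_i$ are controlled (indeed eventually constant, with the stabilized value computable). This should follow from analyzing $\beta_i = (f^{\circ i}(x)-f^{\circ i}(y))/(f^{\circ(i-1)}(x)-f^{\circ(i-1)}(y))$ near $p$: since $f^{\circ(m_{2d-1})}$ already maps a neighborhood of $p$ to a neighborhood of a \emph{fixed} ramification point $c$, the subsequent iterates contribute factors governed by the local behavior of $f$ at $c$ (which is $z\mapsto c + a(z-c)^e+\cdots$ with $e\ge 2$), and coprimality of successive leading forms reduces to the elementary fact that $z^{e}-1$ and $z^{e^2}-1$ differ. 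I would isolate this as a short separate local computation. Everything else is routine Taylor-expansion bookkeeping with effectively computable constants, exactly paralleling Lemma~\ref{near_infty}.
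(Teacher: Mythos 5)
Your overall strategy---normalize $\beta_M$ and $\beta_N$ near $p$ and exploit coprimality of the resulting ``direction'' polynomials, in parallel with Lemma~\ref{near_infty}---is the right one, but two of your structural claims are false, and the second one is exactly where the proof lives. First, it is not true that $B_i$ misses $p$ for $i>m_{2d-1}$, nor that the local multiplicities stabilize. Lemma~\ref{multiple} and Remark~\ref{rm} give that $\theta=f^{\circ h}(\xi)=f^{\circ h}(\eta)$ (with $h=m_{2d-1}$) is a \emph{fixed} critical point of $f$ of ramification index $r$ with $2\le r<d$. Writing $f^{\circ h}(\xi+t)=\theta+\lambda t^{\sigma}+\cdots$ and $f^{\circ h}(\eta+u)=\theta+\lambda'u^{s}+\cdots$, the local equation of $D_N$ at $p$ is $\epsilon t^{\sigma r^{Q}}-\epsilon'u^{sr^{Q}}+\cdots$ with $Q=N-h$, so $B_N=D_N-D_{N-1}$ passes through $p$ with multiplicity roughly $\min(\sigma,s)(r^{Q}-r^{Q-1})$ for \emph{every} $N>m_{2d-1}$: the multiplicities grow exponentially, and the content of the lemma is precisely that their growth rate $r^{N}$ is strictly smaller than $d^{N}$. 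Your reading (``eventually constant, hence the limit is trivially $0$'') removes the actual difficulty, and is contradicted by your own final paragraph, where the local model $z\mapsto c+a(z-c)^{e}+\cdots$ with $e\ge 2$ forces the multiplicities to increase.

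Second, the ordinary transversality/coprime-leading-forms claim fails. When $\sigma\neq s$, the lowest-degree homogeneous part of $\epsilon t^{\sigma r^{Q}}-\epsilon'u^{sr^{Q}}$ is a pure power of $t$ or of $u$---the same one for every $N$---so all branches of the $B_N$ through $p$ are tangent to the same coordinate axis, $\widetilde P_M$ and $\widetilde P_N$ vanish simultaneously along it, and no positive lower bound $W_{M,N,v}$ on the region $\max(|t|_v,|u|_v)\le 1$ exists. The paper's proof replaces your ordinary normalization by a weighted one: assuming $|t^{\sigma}|_v\le|u^{s}|_v$ (and arguing symmetrically otherwise), one gets $|\beta_N(\xi+t,\eta+u)|_v=|u|_v^{s(r^{Q}-r^{Q-1})}\bigl(|\beta_N^{*}(\rho)|_v+o(1)\bigr)$ with $\rho=t^{\sigma}/u^{s}$, where $\beta_N^{*}$ is the one-variable polynomial whose roots are the roots of unity of order dividing $r^{Q}$ but not $r^{Q-1}$; these $\beta_N^{*}$ are pairwise coprime, which is the correct substitute for your coprimality of leading forms. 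Taking $r_N=\max(\sigma,s)r^{Q}$ then yields \eqref{comp}, and $r_N/(d^{N}-d^{N-1})\to 0$ because $r<d$. This weighted local analysis is the step you flag as ``the main obstacle''; as it stands your proposal does not supply it, and the framework you set up (bounded multiplicities, ordinary leading forms) is incompatible with it.
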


\begin{proof}
Since the point $p=(\xi,\eta)$ is contained in the intersection of $2d$ divisors
$B_{m_0},\dots, B_{m_{2d -1}}$, Lemma \ref{multiple} implies that $f^{\circ m_i
- 1}(\xi) = f^{\circ m_i - 1}(\eta)$ is a periodic, and hence fixed, critical
point for $f$ .  If that fixed critical point is the totally ramified point,
then $\xi=\eta=\infty$.  Since $p\neq (\infty, \infty)$, we may assume $f^{\circ
m_i-1}(\xi)=f^{\circ m_i-1}(\eta) = \theta$ is not a totally ramified point for
$f$ (see Remark~\ref{rm}); let $r$ be its ramification index.  Then, for any $Q\ge1$, we have
$$
f^{\circ Q}(\theta+t_1) = \theta + \gamma t_1^{r^Q}+\dots
$$
with $r<d$ and
$$
f^{\circ Q}(\theta+t_1)-f^{\circ Q}(\theta+u_1) = \gamma' (t_1^{r^Q}-u_1^{r^Q})
+\dots
$$
Write $h=m_{2d -1}$.  Taking $N=Q+h$ and denoting $f^{\circ h}(\xi+t) = \theta +
\lambda
t^{\sigma}+\dots$ and  $f^{\circ h}(\eta+u) = \theta + \lambda' u^{s}+\dots$ with $\lambda,\lambda'\neq 0$, 
we obtain
\begin{align*}
|f^{\circ N}(\xi+t)-f^{\circ N}(\eta+u)|_v &=  |f^{\circ Q}( f^{\circ h} (\xi+t)
) - f^{\circ Q}( f^{\circ h}( \eta+u))|_v \\
&= |\epsilon t^{\sigma r^Q}- \epsilon ' u^{sr^Q}|_v + o(1)
\end{align*}
for some constants $\epsilon, \epsilon'$.  Suppose that $|t^\sigma|_v \leq
|u^s|_v$. 
Then we have
\begin{equation}\label{u}
\begin{split}
|\beta_N(\xi + t, \eta + u)|_v &= |\epsilon''' |_v \left| \frac{t^{\sigma r^Q}-
u^{sr^Q}}{t^{\sigma r^{Q-1}}- u^{sr^{Q-1}}} \right|_v +o(1)\\
&=|u|_v^{s(r^Q - r^{Q-1})}(|\beta_N^*(\rho)|_v + o(1))
\end{split}
\end{equation}
where $\rho = t^{\sigma}/u^s$, and $\beta_N^*$ is polynomial
whose roots are the roots of unity of order dividing $r^Q$ but not $r^{Q-1}$. 
Therefore, the distinct $\beta_N^*$ are pairwise coprime, and so for
$M\not=N$, we have
\begin{equation}\label{coprime}
\max(|\beta_M^*(\rho)|_v,|\beta_N^*(\rho)|_v)\ge W_{M,N,v}>0
\end{equation}
for $|\rho|_v\le1$.  If $|t^\sigma|_v \geq |u^s|_v$, we divide instead by
$t^{\sigma}$ and obtain a bound like \eqref{u} for $|\beta_N(\xi + t, \eta +
u)|_v$ in $|t|_v^{\sigma(r^Q - r^{Q-1})}$, also in
terms of coprime polynomials for which a bound analogous to \eqref{coprime}
holds. \medskip Letting $r_N = \max(\sigma, s) r^Q$, and specifying that $|t|_v,
|u|_v < 1$ gives \eqref{comp}.   Finally, since $r<d$ and $Q=N-h$, we
have 
$$\lim_{N \to \infty}(r^Q - r^{Q-1}) / (d^N - d^{N-1}) = 0,$$
so $\lim_{N \to \infty}  r_N/(d^N - d^{N-1}) \rightarrow 0$.  
\end{proof}

Now, we introduce new rational functions $\phi_{N_i}$ on $\bP_1^2$
having poles where  $\beta_{N_i}$ have zeros, but where the behavior of
$|\psi_{N_i}(P)|_v$
can be controlled near certain points in the intersection of at least
$2d$ pole divisors $B_N$.  We will write 
$$ \Phi_N(x,y) = \frac{\alpha_N(x,y)}{\beta_N(x,y)}$$
where $\alpha_N$ vanishes to a high degree at these points and where
$\alpha_N$ and $\beta_N$ have the same pole divisors.

We begin by writing $\beta_N(x,y)$ as a quotient of degree $2 (d^N
- d^{N-1})$ bihomogeneous polynomials as 
\begin{equation}\label{hom}
\begin{split}
\frac{G_N(x_0,x_1; y_0, y_1)}{H_N(x_0,x_1; y_0, y_1)}.
\end{split}
\end{equation}
If $f$ is not a polynomial, then there are no points in the intersection of at
least $2d$ divisors $B_i$ at which any $H_N$ vanishes.  To see this, note that
the product of all $\beta_N$ up to $M$ is $$\frac{P_M(x_0,x_1)Q_M(y_0, y_1) -
Q_M (x_0,x_1)   P_M(y_0,y_1)}{Q_M(x_0, x_1) Q_M (y_0,y_1)}$$ where $P_M$ and
$Q_M$ are the homogeneous quotients of $f$, as in Section~\ref{nota}.   Thus, if
$H$ vanishes at $(a,b)$, then some iterate of $a$ or $b$ under $f$ is the point
at infinity.  On the other hand, if $(a,b)$ is in the intersection of at least
$2d$ divisors, then some iterate of $a$ and some iterate of $b$ is a periodic
ramification point.  But we have chosen coordinates so that no iterate of the
point at infinity is a periodic ramification point, proving the claim. 
We can rewrite \eqref{hom} as
\begin{equation}\label{h}
\begin{split}
\frac{g_N(x,y)}{h_N(x,y)}
\end{split}
\end{equation}
where $x = x_0/x_1$ and $y = y_0/y_1$ as before.  Then $\deg g_N \geq
d^N - d^{N-1}$, since $\deg g_N = d^N - d^{N-1}$ if $f$ is a polynomial and
$\deg g_N \geq 2(d^N - d^{N-1}) - 1$ otherwise, since $f$ fixes the point at
infinity.  We  see that if $f$ is a polynomial, then $h(x,y)$ is a constant (and
hence vanishes nowhere).  We let $d_N$ denote $\deg g_N$ (note that is is
consistent with the definition of $d_N$ before Lemma~\ref{near_infty}).

\begin{lemma}\label{lem2}
  Let $L>0$ be a positive integer, and $\Pi_{\infty}$ a finite set
  of points contained in at least $2d$ of the divisors $B_i$.  Then,
 there are rational functions $\phi_{M_1}, \dots, \phi_{M_L}$ on $\bP_1^2$, with $\phi_{M_i}$ having pole divisor
$B_{M_i}$, where $M_L > \cdots > M_1$,
  and $A_v>0$, $\delta > 0$ such
that whenever $\Delta_v(P,p)<\delta$ for some $p\in
\Pi_{\infty}$, we have $|\phi_{M_\ell}(P)|_v<A_v$ for all but at most one
$\ell \in \{1,\dots, L\}$.
\end{lemma}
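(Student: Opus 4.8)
The plan is to take for the $\phi_N$ the rational functions $\Phi_N=\alpha_N/\beta_N$ set up just before the lemma, choosing the numerator $\alpha_N$ so that it vanishes to order at least $r_N^{(p)}$ at each point $p\in\Pi_\infty$ with $p\neq(\infty,\infty)$, where for such a $p$ — lying, say, in the $2d$ divisors $B_{m_0},\dots,B_{m_{2d-1}}$ of smallest index — we let $r_N^{(p)}$ be the exponent furnished by Lemma~\ref{lem1}, so that $r_N^{(p)}=o(d^N-d^{N-1})$. The key observation is a dimension count: the forms eligible to serve as $\alpha_N$ (those of the bidegree of a defining form of $B_N$, namely $(d^N-d^{N-1},d^N-d^{N-1})$; or, when $f$ is a polynomial, polynomials of degree at most $d_N=d^N-d^{N-1}$) fill a space of dimension $\asymp(d^N-d^{N-1})^2$, whereas the required vanishing at the finitely many points $p$ imposes only $\sum_p\binom{r_N^{(p)}+1}{2}=o\bigl((d^N-d^{N-1})^2\bigr)$ linear conditions. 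Hence there is an effectively computable $N_0$ so that for all $N\ge N_0$ such an $\alpha_N$ exists and can be produced by linear algebra over $K$; a generic choice is coprime to the numerator of $\beta_N$ (the eligible forms divisible by a fixed irreducible factor of that numerator constitute a proper linear subspace), which guarantees that $\phi_N=\alpha_N/\beta_N$ has pole divisor exactly $B_N$. Finally, fix $M_1$ larger than $N_0$ and than all the indices $m_{2d-1}^{(p)}$, $p\in\Pi_\infty$, and let $M_1<M_2<\dots<M_L$ be arbitrary.

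It then remains to bound $|\phi_{M_\ell}(P)|_v$ when $\Delta_v(P,p)$ is small for some $p\in\Pi_\infty$, and the passage from ``all but one $\ell$'' to a pairwise comparison is immediate: if two indices $\ell\neq\ell'$ both satisfied $|\beta_{M_\ell}(P)|_v<C\max(|t|_v^{r_{M_\ell}^{(p)}},|u|_v^{r_{M_\ell}^{(p)}})$ and the corresponding inequality for $M_{\ell'}$, with $C$ the minimum over the finitely many pairs $M_i>M_j$ of the constants from Lemma~\ref{lem1}, this pair would contradict Lemma~\ref{lem1}. So at most one $\ell$ fails the reverse estimate; for every other $\ell$, writing $P=(\xi+t,\eta+u)$, using that $\beta_{M_\ell}$ equals $g_{M_\ell}$ up to a factor bounded away from $0$ and $\infty$ near $p$ (by Lemma~\ref{multiple} and the discussion before the lemma, the denominator $h_{M_\ell}$ does not vanish at $p$), and using that $\alpha_{M_\ell}$ vanishes to order $\ge r_{M_\ell}^{(p)}$ at $p$ so that $|\alpha_{M_\ell}(P)|_v\le C'\max(|t|_v,|u|_v)^{r_{M_\ell}^{(p)}}$, we obtain
\[
|\phi_{M_\ell}(P)|_v=\frac{|\alpha_{M_\ell}(P)|_v}{|\beta_{M_\ell}(P)|_v}\le\frac{C'\max(|t|_v^{r_{M_\ell}^{(p)}},|u|_v^{r_{M_\ell}^{(p)}})}{C\max(|t|_v^{r_{M_\ell}^{(p)}},|u|_v^{r_{M_\ell}^{(p)}})}=\frac{C'}{C},
\]
using the identity $\max(|t|_v,|u|_v)^{r}=\max(|t|_v^{r},|u|_v^{r})$. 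When $p=(\infty,\infty)$ — which can only happen if $\infty$ is exceptional, so $f$ is a polynomial and, by Remark~\ref{rm}, $(\infty,\infty)$ is the only point of $\Pi_\infty$ meeting a fibre at infinity — one argues in exactly the same way, using Lemma~\ref{near_infty} in place of Lemma~\ref{lem1}, the exponents $d_{M_\ell}$ in place of $r_{M_\ell}^{(p)}$, the coordinates $|x|_v,|y|_v$ in place of $|t|_v,|u|_v$, and the bound $|\alpha_{M_\ell}(x,y)|_v\le C'\max(|x|_v,|y|_v)^{d_{M_\ell}}$, valid on the relevant neighbourhood since $\deg\alpha_{M_\ell}\le d_{M_\ell}$. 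Letting $\delta>0$ be the minimum and $A_v$ the maximum of the ratios $C'/C$ over the finitely many $p\in\Pi_\infty$ (and pairs) finishes the proof.

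I expect the first step to be the crux: arranging that $\alpha_N$ vanishes to high order at $\Pi_\infty\setminus\{(\infty,\infty)\}$ while its degree — hence the pole order of $\phi_N$ along $B_N$ — stays comparable to $d^N-d^{N-1}$, and checking that $\phi_N$ really has pole divisor $B_N$. This is precisely where the estimate $r_N^{(p)}=o(d^N-d^{N-1})$ from Lemma~\ref{lem1} is essential; granting it, the bounds in the second step are a routine consequence of the pairwise comparisons. Effectivity is automatic throughout, since the $m_{2d-1}^{(p)}$, the exponents $r_N^{(p)}$, the numerators of the $\beta_N$, the linear system defining $\alpha_N$, and the constants of Lemmas~\ref{lem1} and~\ref{near_infty} are all effectively computable.
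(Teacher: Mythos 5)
Your proof follows the paper's argument in all essentials: the functions are $\phi_N=\alpha_N/\beta_N$ with a numerator $\alpha_N$ vanishing to order roughly $r_N=o(d^N-d^{N-1})$ at the points of $\Pi_\infty$, and the ``all but at most one'' conclusion is extracted from the pairwise lower bounds of Lemma~\ref{lem1} (resp.\ Lemma~\ref{near_infty} at $(\infty,\infty)$) exactly as in the paper. The one place you diverge is the construction of $\alpha_N$: the paper writes it down explicitly as $\prod_j\bigl(g_{\xi_j}(x)g_{\eta_j}(y)\bigr)^{r_{j,N}}$ times a power of $xy$, divided by $h_N$, where $g_{\xi_j},g_{\eta_j}$ are minimal polynomials over $K$, whereas you obtain existence from a dimension count on the space of forms of the relevant bidegree. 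Both are effective, but two details deserve attention in your version. First, to solve the linear system over $K$ you must impose the vanishing conditions at all Galois conjugates of each $p\in\Pi_\infty$; this only multiplies the number of conditions by a bounded factor, so the count survives, and the paper's minimal-polynomial construction handles it automatically. Second, your genericity claim that a solution can be chosen coprime to the numerator of $\beta_N$ is not actually justified by the dimension count (a priori the whole solution space could consist of multiples of some component), whereas the paper's explicit $\alpha_N$ is a product of fibral factors and hence visibly shares no component with $B_N$; however, this is harmless for the intended application, since Proposition~\ref{s0} and Lemma~\ref{lem3} only require the pole divisor of $\phi_N$ to be \emph{supported} on $B_N$, which holds for any nonzero admissible $\alpha_N$.
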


\begin{proof}
Write $\Pi_{\infty}=\{p_1,\dots,p_k\}$. Take $N$ large enough so that the
conclusion of Lemma \ref{lem1} holds for each point $p_j=(\xi_j, \eta_j)\neq
(\infty, \infty)$ in $\Pi_\infty$, and write $r_{j,N}$ for the maximum of the
ramification indices of $f^{\circ N}$ at $\xi_j$ and $\eta_j$.  Let $e(j)$ be
the product of the degrees of $\xi_j$ and $\eta_j$ over $K$, and let
$g_{\xi_j}(x)$ and $g_{\eta_j}(y)$ denote their respective minimal polynomials
over $K$.  Since $r_{j,N}/d_N \longrightarrow 0$, increase $N$ so that
$\sum_{j=1}^k 2e(j) r_{j,N} < d_N$ by Lemma~\ref{lem1}. Then 
$$ \prod_{j=1}^k (g_{\xi_j}(x)g_{\eta_j}(y))^{r_{j,N}}$$ 
is a polynomial with coefficients in $K$ vanishing at each of the
points $p_j=(\xi_j, \eta_j)\neq (\infty, \infty)$ with multiplicity at least
$r_{j, N}$.  Furthermore, denoting $e_N$ for the degree of $\alpha_N$, we find
that $e_N \le \sum_{j=1}^k 2e(j) r_{j,N} < d_N$.  We let
\begin{equation}\label{alpha}
\alpha_N(x,y) = \frac{\prod_{i=1}^k
    (g_{\xi_i}(x)g_{\eta_i}(y))^{r_{j,N}} (xy)^{d_N - \sum_{j=1}^k
2e(j) r_{j,N} }}  {h_N(x,y)},
\end{equation}
where $h(x,y)$ is as in \eqref{h}.  Then the pole divisor of
$\alpha_N(x,y) / \beta_N(x,y)$ is simply the zero divisor of
$\beta_N(x,y)$.

Let $\phi_{M_1}, \dots, \phi_{M_L}$ be the rational function
$\alpha_{M_i}(x,y)/\beta_{M_i}(x,y)$, where $M_L > \dots > M_1$ are
sufficiently large as above.   Let $p_j \not= (\infty, \infty)$ be a
point in $\Pi_\infty$.  By \eqref{alpha}, we have a computable constant
$ E_{M_i, j, v}$ such that 
$$ |\alpha_{M_i} (\xi + t, \eta + u)|_v \leq E_{M_i, j, v} |t|_v^{r_{j,N}}
|u|_v^{r_{j,N}}$$  
for small $|t|_v, |u|_v$ by \eqref{alpha}.   Thus, by \eqref{comp},
we have for any $i \not=k$ that
$$ \min( |\phi_{M_i} (\xi + t, \eta + u)|_v,   |\phi_{M_k} (\xi + t,
\eta + u)|_v) \leq A_{M_i, M_k, j, v}$$
for some computable  $A_{M_i, M_k ,j, v}$.  Letting $A_{j,v}$ be
the maximum of these $A_{M_i, M_k, j, v}$ gives $|\phi_{M_\ell}(P)|_v<A_{j,v}$
for all but at most one $\ell \in \{1,\dots, L\}$ when $\Delta_v(P, p_j) <
\delta$, for some computable $\delta > 0$.  

Now, suppose that $p_j = (\infty, \infty)$.  Then $f$ is a polynomial
so $\beta_{N_i}$ and $\alpha_{N_i}$ are each  polynomials of degree
$d^{N_i} - d^{N_i -1}$.  Thus, we have 
$$\frac{|\alpha_{N_i}(x,y)|_v}{\max (|x|_v^{ d_{N_i}}, |y|_v^{ d_{N_i}} )} <
U_{j,v}$$
for some computable constant $U_{j,v}$.  Then, \eqref{comp2} implies that
for any $i \not= k$, there is a constant $A_{M_i, M_k, j,v}$ we have 
$$ \min(|\phi_{M_i}(x,y)|_v,  |\phi_{M_k}(x,y)|_v) \leq A_{M_i, M_k,
  j,v}$$ 
for all sufficiently large $|x|_v, |y|_v$.  Taking $A_{j,v}$ to be the maximum
of these  $A_{M_i, M_k, j,v}$ gives $|\phi_{M_\ell}(P)|_v<A_{j,v}$ for all but
at most one $\ell \in \{1,\dots, L\}$ when $\Delta_v(P, p_j) < \delta$.  
 
Finally, taking $A_v$ to be the maximum of all the $A_{j,v}$ gives the
desired $A_v$ and our proof is complete.
\end{proof}

Now, we use the Lemma~\ref{lem1} to produce a set of rational functions,
each having a pole divisor supported on some $B_i$, 
that are bounded away from a finite set of points contained in the
intersection of at least $2d$ $B_i$ at each place $v \in S$.  We use
induction on the size of $S$.

\begin{proposition}\label{s0}
There exists a set $\Psi$ of rational functions $\psi_{N_i}$ with
$i\in\{1,\dots, 2ds+1\}$, each having pole divisor with support on $B_{N_i}$,
and a finite set of points $\Pi = \{p_1, \dots, p_k \}$, each contained in at
least $2d$
distinct $B_{N_i}$  such that, for any $\delta > 0$, there are effectively
computable constants $T_\delta$ with the property that for any $Q$ that is
$S$-integral relative to $\sum B_{N_i}$, one of the following conditions holds:
\begin{enumerate}
\item there is a $\psi \in \Psi$ such that $|\Psi(Q)|_v \leq T_\delta$
  for all $v \in S$;
\item for each $v \in S$, there is a $p_v \in \Pi$ such that
  $\Delta_v(Q,p_v) < \delta$.  
\end{enumerate}
\end{proposition}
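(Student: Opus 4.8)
The plan is to build the set $\Psi$ and the set $\Pi$ by an induction on $s = \#(S \setminus S_\infty)$ (or more conveniently on $\#S$), feeding in one new place of $S$ at each step and using Lemma~\ref{lem2} to ``repair'' the finitely many bad points that arise. The starting point is the following observation: by Lemma~\ref{multiple}, the points lying in the intersection of $2d$ or more of the divisors $B_i$ are exactly those whose orbit hits a periodic ramification point, and by the normalization made before Remark~\ref{rm} these are fixed ramification points; hence for a fixed large $N$ the candidate bad points $\Pi_\infty$ form one fixed finite set, independent of which $B_{N_i}$ we eventually select. First I would fix $N$ large enough that Lemma~\ref{lem1} (and, at $(\infty,\infty)$, Lemma~\ref{near_infty}) applies at every point of $\Pi_\infty$, and set $\Pi$ to be this finite set.

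Next I would set up the induction. For the base case $S = S_\infty$ (no finite places), one takes any collection of $2d\cdot 0 + 1 = 1$ rational function $\psi_{N_1}$ with pole divisor $B_{N_1}$; at an archimedean place the previous lemmas give the required bound, so alternative (1) or (2) holds trivially with a single function. For the inductive step, suppose $\Psi_0 = \{\psi_{N_1},\dots,\psi_{N_{2d(s-1)+1}}\}$ works for $S_0 = S \setminus \{v_0\}$, and let $v_0$ be the new place. Apply Lemma~\ref{lem3} to the functions in $\Psi_0$ at the place $v_0$: either $Q$ is $v_0$-adically $\delta$-close to one of the finitely many points in the intersection $\Pi'$ of two of their pole divisors, or all but one of the $|\psi(Q)|_{v_0}$ are bounded. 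The points of $\Pi'$ that lie on $2d$ or more $B_i$ are already in $\Pi$, which is handled by alternative~(2); the remaining points of $\Pi'$ lie on fewer than $2d$ of the $B_i$, and at those points I would invoke Lemma~\ref{lem2} with $L = 2d$ to produce $2d$ new functions $\phi_{M_1},\dots,\phi_{M_{2d}}$ (with $M_i$ larger than all previously used indices) whose $v_0$-adic absolute values at $Q$ are bounded for all but one index whenever $Q$ is close to such a point; since $Q$ is $S$-integral relative to the relevant $B$'s, being $v_0$-adically close to a point on fewer than $2d$ of the $B_i$ still leaves at least one $\phi_{M_i}$ with a genuine pole avoided, so among these $2d$ functions at least $2d-1$ are bounded at $v_0$ and all are bounded at the old places by the choice $M_i$ large (using Lemma~\ref{lem3} again at $v\in S_0$, absorbing the new points into $\Pi$ if necessary). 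Adjoining these $2d$ new functions to $\Psi_0$ gives $\Psi$ of the required size $2ds+1$.

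To finish, I would verify the dichotomy for the enlarged $\Psi$ and $S$. Fix $Q$ that is $S$-integral relative to $\sum B_{N_i}$. If for some $v \in S_0$ there is no old function bounded at $v$, then by induction alternative (1) or (2) already held for $S_0$ — but alternative (1) for $S_0$ gives a $\psi$ bounded at \emph{all} $v \in S_0$, and we must also control it at $v_0$: this is exactly where Lemma~\ref{lem3} at $v_0$ enters, pushing us either into alternative (2) (a point of $\Pi$) or into a situation where that $\psi$ is bounded at $v_0$ too. The genuinely delicate bookkeeping is ensuring that a single function is bounded at \emph{every} place of $S$ simultaneously: at each place we can kill all but one function, but the ``surviving'' function can differ from place to place, so one must choose the $2d$ new functions large enough and numerous enough that a pigeonhole across the at most $|S|$ places still leaves one function that is small everywhere, unless $Q$ is close to a point of $\Pi$ at some place. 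This pigeonhole/bookkeeping step — coordinating the ``bad index'' across all $v\in S$ while keeping $\Pi$ finite and the functions effectively computable — is the main obstacle; the analytic inputs (Lemmas~\ref{lem3}, \ref{lem1}, \ref{lem2}, \ref{near_infty}) are all in hand, and the effectivity is automatic since every constant produced by those lemmas is computable and the induction has length $|S|$.
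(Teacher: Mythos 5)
Your plan assembles the right ingredients (Lemmas~\ref{lem1}, \ref{near_infty}, \ref{lem2}, \ref{lem3}, and a count of at most $2d$ ``bad'' functions per place against $2ds+1$ functions in total), but the central combinatorial step is missing, and you say so yourself: you never show how to coordinate the surviving index across all places of $S$ simultaneously. The paper closes exactly this gap by inducting on a different variable. It does not induct on the number of places; it keeps $S$ fixed and inducts on $s_0$, the number of places $v \in S$ at which $Q$ is forced to be $\delta$-close to a point of $\Pi$. Property~(*) at level $s_0$ says: either some $\psi \in \Psi$ is bounded at \emph{all} $v \in S$, or there is a subset $S_0 \subseteq S$ of size $s_0$ at each of whose places $Q$ is near a deep intersection point. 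The inductive step adjoins $2ds+1$ fresh functions $\phi_{M_i}$ (pole divisors $B_{M_i}$, all indices new) built via Lemma~\ref{lem2} so that near the old $\Pi$ all but one are bounded; near a shallow intersection point (lying on at most $2d-1$ of the $B_{M_i}$) at most $2d$ are unbounded, by Lemma~\ref{lem3} and a sufficiently small choice of $\delta$; and away from all intersection points at most one is unbounded. Hence across the $s$ places at most $2ds$ of the $2ds+1$ new functions can fail somewhere, so one is bounded everywhere --- \emph{unless} at some place $Q$ is near a new deep intersection point; that place is distinct from the $s_0$ old ones because distinct bad points are separated by more than $2\delta$, so $s_0$ increments. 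At $s_0 = s$ this is precisely the dichotomy of the Proposition.

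Two further concrete problems with your version. First, in your inductive step you invoke Lemma~\ref{lem2} at points lying on \emph{fewer} than $2d$ of the $B_i$; that lemma applies only to points contained in at least $2d$ of the divisors (that is where the high-vanishing numerators $\alpha_N$ are needed), and at shallow points the correct tool is Lemma~\ref{lem3} with $\delta$ chosen smaller than the distance from those points to the remaining pole divisors. Second, your claim that $v_0$-adic closeness to a point of $\Pi$ ``is handled by alternative (2)'' is false: alternative (2) requires closeness at \emph{every} place of $S$, so a single bad place gives nothing directly --- it is exactly the event that must be accumulated one place at a time, which is what the $s_0$-induction accomplishes. Relatedly, the set of points lying on at least $2d$ of the $B_i$ is not one finite set fixed in advance independently of which indices are used; it grows with the indices, and the paper's $\Pi$ is enlarged at each inductive step by the new deep intersection points $\Pi^* \setminus \Pi_0$ of the freshly chosen $B_{M_i}$.
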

\begin{proof}
Let $s = |S|$.  We say that Property (*) holds for $s_0 \leq s$ if  there exists
a set $\Psi$ of rational functions $\psi_{N_i}$, each having pole divisor with
support on $B_{N_i}$, and a finite set of points $\Pi = \{p_1, \dots, p_k \}$,
each contained in at least $2d$ distinct $B_{N_i}$  such that, for any
$\delta > 0$, there are effectively computable proper constants $T_\delta$ with
the property that for any $Q$ that is $S$-integral relative to all of the
$B_{N_i}$, one of the following conditions holds:
\begin{enumerate}
\item there is a $\psi \in \Psi$ such that $|\psi(Q)|_v \leq T_\delta$ for all
$v \in S$;
\item there is a subset $S_0 \subseteq S$ with $|S_0| = s_0$ such that for
each $v \in S_0$, there is a $p_v \in \Pi$ such that $\Delta_v(Q,p_v) <
\delta$. 
\end{enumerate}

We will prove, by induction, that for every $s_0 \leq s$, Property (*) holds
for $s_0$.  If $s_0 = 0$, then (ii) holds since it is vacuously true for the
empty set.  Now suppose that Property (*) holds for some $s_0 \leq s - 1$, and
let $\Pi$ and $T_\delta$ be the set and constants for which (i) or (ii) holds.  
By Lemma \ref{lem2}, there is a computable proper $\delta_0 > 0$ such that we
may construct a set $\Phi$ of $2ds+1$ rational functions   $\phi_{M_1},
\dots, \phi_{2ds +1}$, with $\phi_{M_i}$ having pole   divisor $B_{M_i}$,
such that if $\Delta_v(R,p)<\delta_0$ for some $p_v \in \Pi$, then 
\begin{equation}\label{B}
\text{$|\phi_{M_i}(R)|_v < A_v$ for all   but at most one $i\in\{1,\dots,
2ds+1\}$.}
\end{equation}

Let $\Pi^*$ denote the finite set of points at which the $B_{M_i}$
intersect.  We let $\Pi_0$ denote those at which at most $2d -1$ meet.
Choose $\delta_1$ small enough such that if $p_v \in
\Pi_0$ is in $B_{M_{i_1}} \cap \dots \cap B_{M_{i_e}}$ for distinct
$\{i_1, \dots i_e \}$ and is in no other $B_{M_j}$ (note that $e \leq (2d-1)$),
then $\Delta_v(R,p_v) >\delta_1$ for all $R \in B_{M_k}$ for $k \notin \{i_1,
\dots, i_e \}$. Take any $R \in \bP_1(K)$ outside the support of the $B_{M_i}$. 
If $\Delta_v(R,p_v) < \delta_1$ for some $p_v \in \Pi_0$, where $p_v \in
B_{M_{i_1}} \cap \cdots \cap B_{M_{i_e}}$ for distinct $\{i_1, \dots
i_e \}$, then $|\phi_{M_i}(R)|_v$ is bounded by some computable
$\omega_v$ for all all but at most one $i \notin \{i_1, \dots i_e \}$ by our
choice of $\delta_1$, by Lemma~\ref{lem3}, so in particular $|\phi_{M_i}(R)|_v
\leq \omega_v$ for all but at most $2d$ of the $M_i$.

Now, let $\delta < \min(\delta_0, \delta_1)$.  Suppose further that
$\delta$ is less than half the minimum of the distances between
distinct points of $\Pi_0$.  If $\Delta_v(R,p_v) \geq \delta$ for all
$p_v \in \Pi^*$, then for all but at most one of
$\phi_{M_1}, \dots, \phi_{M_{2ds + 1}}$ we have $|\phi_{M_i}(R)|_v
\leq \gamma_v$ for some computable $\gamma_v$ by
Lemma~\ref{lem3}.  Putting this together, we see that since there are
$s$ places in $S$ and $2ds + 1$ functions $\phi_{M_i}$, this means that
there is some $\phi_{M_\ell}$ such that $|\phi_{M_\ell}(R)|_v$ is
bounded by $\max(\gamma_v, \omega_v, A_v)$ for all $v \in S$ unless
$\Delta_v(R, p_v^*) < \delta$ for some $v \in S $ and some $p_v^* \in
\Pi^* \setminus (\Pi_0 \cup \Pi)$.

Let $Q$ be a point that is $S$-integral relative all of the $B_{N_i}$
and all of the $B_{M_i}$. Suppose that there is no $\psi_{N_i}$ such
that $|\psi_{N_i}(Q)|_v$ is bounded by $T_\delta$, where ($T_\delta$ is
as in the statement of Property (*)) for all $v \in S$.  Then there is a subset
$S_0 \subset S$ with $|S_0| = s_0$ such that for each $v \in S_0$, there is
a $p_v \in \Pi$ such that $\Delta_v(Q,p_v) < \delta$, by the inductive
hypothesis.  Suppose that there is no $|\phi_{M_\ell}(R)|_v$ that is
bounded by $\max(\gamma_v, \omega_v, A_v)$ for all $v \in S$.  Then, as
above, there is a $v \in S$ such that $\Delta_v(Q,p_v^*) < \delta$ for
some $p_v^* \in \Pi^* \setminus (\Pi \cup \Pi_0)$; no two elements of
$\Pi^*$ are within $2 \delta$ of each other, we must have $v \notin
S_0$.  Thus there is a set $S'$ of size $s_0 + 1$ such that for each
$v \in S'$, there is $p_v \in \Pi \cup (\Pi^* \setminus \Pi_0)$ such
that $\Delta_v(Q,p_v) < \delta$.  Hence, letting $\Psi' = \Psi \cup
\Phi$ and $\Pi' = \Pi \cup (\Pi^* \setminus \Pi_0)$, we see that
Property (*) holds for $s_0 + 1$.  This completes the inductive step and
our proof is done.\end{proof} 

Finally, we use Proposition~\ref{s0} to prove Theorem~\ref{eff-degen}.  

\begin{proof}[Proof of Theorem~\ref{eff-degen}]
Let $\Psi$ and $\Pi$ be the sets given by Proposition~\ref{s0}.  Recall that
each $\psi_{N_i} \in \Psi$ has a pole divisor with support on $B_{N_i}$.   
Applying Lemma~\ref{lem2}, we obtain $s + 1$ rational functions $\phi_{M_i}$
each having pole divisor with support on $B_{M_i}$ along with computable $\delta
> 0$ and constants $\gamma_v$ such that whenever $\Delta_v(P,p_v)<\delta$ for
some $p_v \in \Pi$, we have $|\phi_{N_i}(P)|_v<\gamma_v$ for all but at most one
$i \in \{1,\dots, s+1\}$.  Take any $Q$ that is $S$-integral for $D_{k_0}$ where
$k_0$ is the maximum of all the $N_i$ and $M_i$ above.   Then we have a constant
$\kappa$ such that  $\sum_{v \notin S} \max(\log |\psi_{N_i}(Q)|_v,0) \leq
\kappa$ and $\sum_{v \notin S}\max( \log |\phi_{M_i}(Q)|_v,0) \leq \kappa$
for all $v \in S$.  Furthermore, by Proposition~\ref{s0} we have one of the
following:
\begin{enumerate}
\item there is a $\psi_{N_i} \in \Psi$ such that $|\psi_{N_i}(Q)|_v \leq
T_\delta$ for all $v \in S$; 
\item for each $v \in S$, there is a $p_v \in \Pi$ such that $\Delta_v(Q,p_v)
< \delta$.  
\end{enumerate}
If (i) holds, then there is some there is some $\psi_{N_i}$ such that
$$\sum_{\text{places $v$ of $K$}}\ \max(|\log \psi_{N_i}(Q)|_v,0) \leq \kappa +
\sum_{v \in S} T_\delta.$$

If (ii) holds, then for each   $|\phi_{M_i}(P)|_v<\gamma_v$ for all but at most
one $i \in \{1,\dots, s+1\}$.  Since there are only $s$ places in $S$, this
means there is some $\phi_{M_i}$ such that $|\phi_{M_i}(P)|_v<\gamma_v$  for
all $v \in S$.  Hence, there is some $\phi_{M_i}$ such that $$\sum_{\text{places
$v$ of $K$}} \max( \log |\phi_{M_i}(Q)|_v, 0) \leq \kappa + \sum_{v \in S}
\gamma_v.$$

Since there are only finitely many points $z \in K$ of bounded height and
$Q$ lies on a curve of the form $\psi_{N_i}(x,y) = z$  or $\phi_{M_i}(x,y) = z$,
with $z \in K$ having bounded height, we see then that $Q$ lies on an
effectively computable proper subvariety of $\bP_1^2$, as desired.  
\end{proof}

\section{Effective finiteness}\label{fini}

Silverman mentions that
\cite[Theorem A]{SilSiegel} can be made effective.  For the sake of
completeness, we give a quick proof of this fact.

\begin{theorem}\label{silverman-eff}
Let $K$ be a number field, let $S$ a finite set of primes in $K$, let $f: \bP_1
\lra \bP_1$ be a rational function with degree $d \geq  2$, let $a$ be a point
that is not periodic for $f$, and let $b$ be a point that is not exceptional for
$f$.   Then the set of $n$ such that $f^{\circ n}(a)$ is integral relative to
$b$ is finite and effectively computable.  
\end{theorem}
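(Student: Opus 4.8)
The plan is to reduce the statement about iterates $f^{\circ n}(a)$ being $S$-integral relative to $b$ to a height inequality that forces $n$ to be bounded, using the effective degeneracy result of Section~\ref{dege} together with a Weil-height comparison. First I would translate integrality into the language already set up in Section~\ref{nota}: by the discussion around \eqref{weil}, $f^{\circ n}(a)$ is $S$-integral relative to $b$ if and only if the pair $(f^{\circ n}(a), b) \in \bP_1^2(K)$ is $S$-integral relative to the diagonal $D_0$. After enlarging $S$ so that $f$ has good reduction outside $S$, I would use \eqref{functoriality}: for any fixed $k$, if $(f^{\circ n}(a), b)$ is $S$-integral relative to $D_0$ and $n \geq k$, then $(f^{\circ(n-k)}(a), f^{\circ k}(b^{-})$ --- more precisely I should run $f^{\circ k}$ on $b$, so instead I pull back along the map acting on both coordinates by the same power --- is $S$-integral relative to $D_k$. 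The cleanest route is: $(f^{\circ n}(a), b)$ $S$-integral rel.\ $D_0$ with $n \ge k$ forces $(f^{\circ(n-k)}(a), w_k)$ to be $S$-integral relative to $D_k$ for a suitable point $w_k$ depending on $b$ and $k$; one then intersects with the degeneracy subvariety.

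\textbf{Key steps.} Step 1: Fix $k_0$ as in Theorem~\ref{eff-degen}, so there is an effectively computable proper closed $Z \subsetneq \bP_1^2$ containing all $K$-points that are $S$-integral relative to $D_{k_0}$. Since a curve on $\bP_1^2$ meets each horizontal fiber $\bP_1 \times \{b\}$ in finitely many (effectively bounded) points, the only way infinitely many iterates $f^{\circ n}(a)$ can contribute is if one of the components of $Z$ is a whole fiber $\bP_1 \times \{c\}$ with $c$ a preperiodic point, or $\{c\} \times \bP_1$. Step 2: Handle these fiber components directly. If $Z \supseteq \bP_1 \times \{c\}$ with $c$ periodic, this does not obstruct us because we are working with the second coordinate fixed at $b$, not at $c$; and the finitely many fibers $\{c_i\} \times \bP_1$ can only catch $f^{\circ n}(a) = c_i$ for finitely many $n$ since $a$ is not periodic (here one needs effectivity of ``$f^{\circ n}(a)$ hits a given point only finitely often,'' which follows from a height lower bound $h(f^{\circ n}(a)) \to \infty$, itself effective via $h(f(x)) = d\,h(x) + O(1)$). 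Step 3: For $n$ large with $f^{\circ n}(a)$ landing on a genuine curve component $C$ of $Z$ not a vertical/horizontal fiber, intersect $C$ with the fiber $\bP_1 \times \{b\}$: this gives finitely many possible values of $f^{\circ n}(a)$, hence finitely many $n$. All bounds are effective because $Z$, the components, the intersection points, and the height constant in $h(f(x)) = d\,h(x) + O(1)$ are all effectively computable.

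\textbf{Main obstacle.} The delicate point is getting from ``$(f^{\circ n}(a), b)$ is $S$-integral relative to $D_0$'' to something one can feed into Theorem~\ref{eff-degen}, which is stated for integrality relative to $D_{k_0}$. Applying \eqref{functoriality} requires acting by $f^{\circ k_0}$ on \emph{both} coordinates, so $(f^{\circ n}(a), b)$ rel.\ $D_0$ is equivalent to $(f^{\circ(n-k_0)}(a), w)$ rel.\ $D_{k_0}$ \emph{only if} $b = f^{\circ k_0}(w)$ for some $w$, i.e.\ $b$ lies in the image of $f^{\circ k_0}$ --- which need not hold in $\bP_1(K)$, though it does over $\bar K$. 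The fix is to work with the $\bar K$-point $w$ (or equivalently with the pullback divisor $(f^{\circ k_0})^{-1}(b)$) and observe that $S$-integrality relative to $D_{k_0}$ of $(f^{\circ(n-k_0)}(a), \cdot)$ still places $f^{\circ(n-k_0)}(a)$ in the degeneracy subvariety after intersecting with the appropriate fiber; the effectivity survives because all the relevant algebraic points have bounded degree and height over $K$. Alternatively, and more robustly, one can bypass \eqref{functoriality} entirely and prove Theorem~\ref{silverman-eff} self-containedly from the Roth-type / unit-equation input behind $\bP_1$ minus finitely many points, but since the paper wants a ``quick proof,'' the pullback route above is the intended one, with the image-of-$f^{\circ k_0}$ subtlety being the one genuinely non-formal step.
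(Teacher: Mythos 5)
There is a genuine gap, and it sits exactly where you lean on Theorem~\ref{eff-degen}. After pulling back along $(f,f)^{k_0}$ you are reduced to the points $(f^{\circ(n-k_0)}(a), w)$, with $f^{\circ k_0}(w)=b$, all lying on the effectively computable proper subvariety $Z$; you then intersect $Z$ with the fiber $\bP_1\times\{w\}$. But Theorem~\ref{eff-degen} gives no control over \emph{which} proper subvariety $Z$ is, and nothing prevents $\bP_1\times\{w\}$ itself from being a component of $Z$ (the components produced in Section~\ref{dege} are level sets $\psi(x,y)=\gamma$, which can perfectly well contain horizontal fibers). In that case the intersection is the whole fiber and you conclude nothing. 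Your Step 2 dismisses this by asserting that the fiber components of $Z$ sit over points $c$ other than the one you care about, but there is no reason for that, and disposing of the case $c=w$ is precisely the one-variable finiteness statement being proved, so the argument becomes circular there. A telling symptom is that your proof never uses the hypothesis that $b$ is not exceptional, whereas the statement is false without it (see Proposition~\ref{exceptional}): when $b$ is exceptional and $f$ is a polynomial, the fiber over $b$ genuinely carries infinitely many $S$-integral points and is genuinely forced into $Z$. Note also that the paper's architecture runs the other way: Theorem~\ref{silverman-eff} is proved independently precisely so that it can be invoked in the proof of Theorem~\ref{main} to handle the points that land on the degeneracy locus (in particular on its fiber components); deriving it from Theorem~\ref{eff-degen} is therefore not an available route.

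The intended quick proof is one-dimensional and does not touch Section~\ref{dege} at all. Since $b$ is not exceptional, $f^{-4}(b)$ contains at least three distinct points (a short count of totally ramified points via Riemann--Hurwitz). For $n\ge 4$, $f^{\circ n}(a)$ is $S$-integral relative to $b$ if and only if $f^{\circ(n-4)}(a)$ is $S$-integral relative to the divisor $f^{-4}(b)$, whose support has at least three points; and $S$-integral points on $\bP_1$ minus three points reduce, after a change of coordinates, to the $S$-unit equation, which is effectively solvable \cite[Theorem 5.4.1]{BG}. This is where the non-exceptionality of $b$ does its work. Your closing remark --- that one could bypass \eqref{functoriality} and argue self-containedly from the unit-equation input behind $\bP_1$ minus finitely many points --- describes exactly the paper's proof; the pullback-into-Theorem~\ref{eff-degen} route you develop is the one that fails.
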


\begin{proof}
  Since $b$ is not exceptional, $f^{-4}(b)$ contains at least three
  distinct points. To see this note that $f^{-2}(b)$ contains at
  least two points, since $b$ is not exceptional. If $f^{-2}(b)$
  contains exactly two points, then there is a totally ramified point
  in $f^{-1}(b)$ or $f^{-2}(b)$.  This point cannot be fixed by $f$ so it
  cannot be in both $f^{-3}(b)$ and $f^{-4}(b)$.  If $f^{-3}(b)$
  contains only two points, then they must both be totally ramified,
  so $f^{-4}(b)$ must contain a point that is not totally ramified
  (because $f$ has at most two totally ramified points, by
  Riemann-Hurwitz), which means that  $f^{-4}(b)$ contains at least
  three points.  
  
For $n \geq 3$, we have that $f^{\circ n}(a)$ is
  $S$-integral relative to $b$ if and only if $f^{\circ (n-3)}(a)$ is
  $S$-integral relative to the points in $f^{-2}(b)$.  Changing
  coordinates, these $f^{\circ (n-3)}(a)$ are solutions to the $S$-unit
  equation, which has an effective solution (see \cite[Theorem
  5.4.1]{BG}, for example).
\end{proof}

Now we can prove Theorem~\ref{main}.

\begin{proof}[Proof of Theorem \ref{main}]
  Theorem \ref{eff-degen} delivers an effectively computable
  one-dimensional subvariety $Z$ such that the $(m, n)$ with $m, n
  \geq k_0$ for which $f^{\circ m}(u)$ is $S$-integral relative to
  $f^{ \circ n}(w)$ are effectively computable for all $(f^{\circ
    m}(u), f^{\circ n}(w))$ outside of $Z$.

Let $c$ be the number of components of $Z$.  Let $I_{u,w}$ denote the
set of $(m,n)$ such that $f^{\circ m}(u)$ is $S$-integral relative to
$f^{\circ n}(w)$.   By
Theorem~\ref{silverman-eff}, we know set of $(m,n) \in I_{u,
  w}$ with $\min(m,n) \leq c + k_0$ is effective computable.
Thus, it suffices to show that the set of $(m,n) \in I_{u,w}$
with $\min(m,n) \geq c + k_0$ and $(f^{\circ m}(u), f^{\circ 
  n}(w)) \in Z$ is effective computable.  Note that if $m , n \geq
c \geq r$, then $f^{\circ m}(u)$ can be $S$-integral relative
to$f^{\circ n}(w)$ only when $f^{\circ (m-r)}(u)$ is $S$-integral
relative to $f^{\circ (n-r)}(w)$.  Hence, it suffices to find all $(m,n)
\in I_{u,w}$ such that $(f^{\circ m}(u), f^{\circ 
  n}(w))$ is in $$ Z \cap (f,f)^{-1}(Z) \cap \dots \cap
(f,f)^{-c} (Z).$$ If this is finite, we are done.  Otherwise, there
is a common component $X$ among $Z, (f,f)^{-1}(Z), \dots,
(f,f)^{-c} (Z)$. Then $(f,f)^i(X)$ is a component of $Z$ for $i =
0, \cdots, c$. Therefore, $(f,f)^{ i}(X) = (f,f)^{ j}(X)$ for
some $c \geq j > i \geq 0$.  So $(f,f)^i(X)$ is a periodic component
of $Z$.

Thus, we are left to show that the points of the form $(f^{\circ m}(u),
f^{\circ n}(w))$ which $S$-integral relative to $D_0$ on any periodic curve $X$ for
$(f,f)$ can be computed.  Now, since $X$ admits a self-map of positive
degree, it must have genus 0 or 1.  Since $X \cap D_0$ contains at
least one point, we see that if $X$ has genus 1, then the integral
points on $X$ relative to $D_0$ can be effectively computed (see
\cite{BC, Bilu-Siegel}).  If $X$ has genus 0, and $X \cap D_0$
contains a nonexceptional point, then we are done by
Theorem~\ref{silverman-eff}.  If $X \cap D_0$ contains only an
exceptional point $z$, then after changing coordinates, we may write
the restriction of $(f,f)^2$ to $X$ as a polynomial $P(t)$, where $z$
is the point at infinity. If $X \cap D_0$ contains two exceptional
points for $P(t)$, the, after changing coordinates, , we
may write the restriction of $(f,f)^2$ to $X$ as a polynomial $t^n$, where $z_1$
is zero and $z_2$ is the point at infinity.  In either case, after expanding $S$
to a possibly larger set of primes $S'$ we have that for any $S'$-integral point
$\gamma$ on $X$, each iterate $(f,f)^{\circ 2i}(\gamma)$ is $S'$-integral
relative to $D_0$.  This means that there are infinitely many $S'$-integral
points relative to $D_0$ on $X$, which contradicts the the main theorem of the
Appendix of \cite{BGKTZ}.
\end{proof}

\section{Cyclic and exceptional cases}

When $f$ is conjugate to a powering map, we do not obtain a finiteness
result.  This can be seen, for example, by considering the map $f(x) =
x^3$ and the points $u = 2$, $w = -2$.  Then, if $S$ is the set
containing the archimedean place and the place 2, we have that
$f^{\circ m}(u)$ is $S$-integral relative to $f^{\circ m}(w)$ for all
$m$.  On the other hand, it is possible to give a reasonable description of the
$(m,n)$ such
that  $f^{\circ m}(u)$ is $S$-integral relative to $f^{\circ n}(w)$.  

In \cite{GTZ2}, it is proved that if $\deg a, \deg b > 2$ for
polynomials $a$ and $b$, then the set of $(m,n)$ such that
$a^{\circ m}(u) = b^{\circ n}(v)$ forms a finite union of cosets of
subsemigroups of $\bN^2$ (that is a finite union of additive
translates of subsets of $\bN^2$ that are closed under
addition). Here, $\bN$ is considered to include 0 so any finite set of
$(m,n)$ is a finite union of cosets of $(0,0)$.   

For a set of places $S'$ containing all the archimedean places, we define
$$ I_{u,w, S'} = \{ (m,n) \in \bN^2 \; \mid \; \text{$f^{\circ m}(u)$ is
  $S'$-integral relative to $f^{\circ n}(w)$} \}.$$

\begin{proposition}\label{cyclic}
  Let $f$ be conjugate to $x^{\pm d}$, let $S$ be a finite set of
  places of $K$.  Then, for some finite set of places $S'$ with $S
  \subseteq S'$, the set $I_{u, w, S'}$ a finite union of effectively
  computable cosets of subsemigroups of $\bN^2$.  Furthermore, the set
  $I_{u, w, S'}$ is finite if $u$ and $w$ are multiplicatively
  independent or if $u$ and $w$ are in the cyclic group generated by a
  non-torsion element of $K^*$.
\end{proposition}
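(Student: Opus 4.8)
The plan is to reduce, by conjugation and an enlargement of $S$, to $f(x)=x^{\pm d}$, then to translate $S'$-integrality into membership in the (finite, effective) solution set of an $S'$-unit equation, and finally to read off the admissible $(m,n)$ from an elementary analysis of two-variable exponential equations. I would first fix an automorphism $\phi$ of $\bP_1$ conjugating $f$ to $x^{d}$ or $x^{-d}$ (over a finite extension of $K$ if $\phi$ is not $K$-rational; this is harmless, since $S$-integrality over $K$ equals $S_L$-integrality over $L$ for $S_L$ the places of $L$ above $S$, and all the enlargements below consist of places above a finite set of places of $K$). As $\phi$ has good reduction outside a finite set $T_\phi$ of places, for $v\notin S\cup T_\phi$ the Zariski closures of $P,Q\in\bP_1(K)$ meet over $v$ iff those of $\phi(P),\phi(Q)$ do; so, replacing $S$ by $S\cup T_\phi$ and $u,w$ by $\phi(u),\phi(w)$ (renaming them $u,w$ --- it is these images to which the multiplicative-independence and cyclicity hypotheses refer), nothing is lost. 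Since $u,w$ are not preperiodic for $f$, conjugacy preserves preperiodicity, and the preperiodic points of $x^{\pm d}$ are $0$, $\infty$ and the roots of unity, the renamed $u,w$ are nonzero, finite, and not roots of unity. I then enlarge $S$ once more to a finite $S'$ containing the archimedean places, the places dividing $d$, and the places with $|u|_v\ne1$ or $|w|_v\ne1$, so that $u,w$ become $S'$-units.

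Take $f(x)=x^d$. For $v\notin S'$ one has $|u^{d^m}|_v=|w^{d^n}|_v=1$, so the defining inequality \eqref{weil} for the $S'$-integrality of $f^{\circ m}(u)=u^{d^m}$ relative to $f^{\circ n}(w)=w^{d^n}$ reduces to $|u^{d^m}-w^{d^n}|_v=1$; hence $(m,n)\in I_{u,w,S'}$ iff $u^{d^m}-w^{d^n}$ is an $S'$-unit. Writing $u^{d^m}-w^{d^n}=w^{d^n}\bigl(u^{d^m}w^{-d^n}-1\bigr)$ and using that $w^{d^n}$ and $u^{d^m}w^{-d^n}$ are $S'$-units, this holds iff $u^{d^m}w^{-d^n}\in\mathcal{E}$, where $\mathcal{E}=\{\,c\in K:\ c\ \text{and}\ 1-c\ \text{are}\ S'\text{-units}\,\}$ is the set of first coordinates of solutions of $X+Y=1$ in $S'$-units. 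By the effective theory of $S$-unit equations (e.g.\ \cite[Theorem 5.4.1]{BG}, already used above), $\mathcal{E}$ is finite and effectively computable, and $1\notin\mathcal{E}$. Therefore $I_{u,w,S'}=\bigcup_{c\in\mathcal{E}}\Sigma_c$, where $\Sigma_c=\{\,(m,n)\in\bN^2:\ u^{d^m}w^{-d^n}=c\,\}$, and it remains to analyse each $\Sigma_c$.

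For fixed $c$ I would pass to the finitely generated group $G=\langle u,w,c\rangle\subseteq K^*$ and compute, effectively, its torsion subgroup $T$ and a basis of its free part $\bZ^{\rho}$: then $\Sigma_c=\emptyset$ unless $c\in\langle u,w\rangle$, and otherwise the equation $u^{d^m}w^{-d^n}=c$ is equivalent to a system of $\rho$ linear equations in $d^m,d^n$ together with a condition on $(d^m\bmod|T|,\,d^n\bmod|T|)$. Since $m\mapsto d^m\bmod|T|$ is eventually periodic, the latter condition cuts out a finite union of cosets of subsemigroups of $\bN^2$ of the form $\ell\bN\times\ell\bN$ (plus finitely many sporadic pairs). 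For the linear part: if $\rho=2$ --- which, when $c\in\langle u,w\rangle$, is exactly the case that $u,w$ are multiplicatively independent --- it determines $d^m$ and $d^n$, so $|\Sigma_c|\le1$; if $\rho=1$ it is a single equation $\alpha d^m-\beta d^n=\gamma$ with $\alpha,\beta$ the exponents of $u,w$ in the free part, and $\alpha\beta\ne0$ since $u,w$ are not roots of unity, for which $\gamma\ne0$ gives --- via the trichotomy $m>n$, $m=n$, $m<n$ --- an effective bound on $\max(m,n)$, hence a finite $\Sigma_c$, while $\gamma=0$ gives the line $\{m-n=k_0\}$ when $\beta/\alpha=d^{k_0}$ for an integer $k_0$ and $\emptyset$ otherwise; if $\rho=0$ only the torsion condition remains. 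Intersecting the solution set of the linear part with that of the torsion condition --- the family of finite unions of cosets of subsemigroups of $\bN^2$ being stable under the intersections that arise here --- shows each $\Sigma_c$, and hence $I_{u,w,S'}$, to be a finite union of effectively computable cosets of subsemigroups of $\bN^2$. It is finite when $u,w$ are multiplicatively independent (then $\rho=2$ for every relevant $c$), and also when $u,w$ generate an infinite cyclic, hence torsion-free, subgroup: there $T$ is trivial, so $\gamma=0$ would force $c\in T=\{1\}$, contradicting $1\notin\mathcal{E}$, whence every $\Sigma_c$ is finite. The case $f(x)=x^{-d}$ reduces to the above after first splitting $\bN^2$ into the four classes of $(m,n)$ modulo $2$ --- each a coset of a subsemigroup --- on which $f^{\circ m}(u)=u^{\pm d^m}$ and $f^{\circ n}(w)=w^{\pm d^n}$ with fixed signs (so $u,w$ get possibly replaced by their inverses).

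I expect the only real work to lie in the last step: rewriting $u^{d^m}w^{-d^n}=c$ as the linear-plus-congruence system, carrying out the elementary but fiddly estimates for $\alpha d^m-\beta d^n=\gamma$, and packaging the outcome into genuine cosets of subsemigroups of $\bN^2$ with effective constants. The first two steps are routine once one recalls that $S$ may be enlarged freely, and the one nontrivial ingredient --- effectivity of $S$-unit equations --- is already in hand from earlier in the paper.
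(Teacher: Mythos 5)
Your proposal is correct and follows essentially the same route as the paper: conjugate $f$ to a power map while enlarging $S$ so that integrality is preserved and $u,w$ become $S'$-units, reduce membership in $I_{u,w,S'}$ to $u^{d^m}w^{-d^n}$ lying in the finite, effectively computable solution set of the $S'$-unit equation, and then analyze the resulting exponential equations (your parity splitting for $x^{-d}$ is equivalent to the paper's passage to $f^{\circ 2}$ over the four shifted orbits). The only difference is one of detail: where the paper asserts that the solution set of $u^{d^{2m}}/w^{d^{2n}}=1+\tau$ ``clearly'' forms a finite union of cosets of subsemigroups of $\bN^2$, you actually carry out the torsion-plus-free-part analysis, which is a welcome elaboration rather than a deviation.
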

\begin{proof}
  After changing coordinates by an automorphism $\sigma \in
  \PGL_2(K')$, for $K'$ a finite extension of $K$, we can write
  $\sigma f \sigma^{-1}(x)
  = x^{\pm d}$.  Choose a set $S'$ of primes that includes both all
  the primes appearing in the coefficients or determinant of $\sigma$
  as well as all the primes lying over primes in $S$;  then for any
  $P,Q \in \bP_1(K')$, we have that $P$ is $S'$-integral relative to
  $Q$ if and only if $\sigma P$ is $S'$-integral relative to
  $\sigma Q$ (note that this choice of $S'$ depends only on $f$, not
  on $u$ or $w$).  Thus, it suffices to prove the theorem when $f(x) =
  x^{\pm d}$. 

  \smallskip If $f(x) = x^{-d}$, then by considering the orbit of $(u,
  w)$ along with those of $(f(u), w)$, $(u, f(w))$, and $(f(u),
  f(w))$, we reduce to the case where $f(x) = x^{d^2}$ for some
  $d$. If $u$ or $w$ is zero or infinity, the conclusion is obvious.
  If neither $u$ nor $v$ is 0 or infinity, we may assume that $u$ and
  $w$ are both $S'$-units after expanding $S'$.  Then $f^{\circ m}(u)
  - f^{\circ n}(w)$ is an $S'$-unit if and only if $\frac{f^{\circ
      m}(u)}{f^{\circ n}(w)} - 1$ is an $S'$-unit.  Thus, if
  $(f^{\circ m}(u), f^{\circ n}(w))$ is $S'$-integral relative to
  $D_0$, then it lies on a curve of the form $x - y = \tau y$ where
  $\tau$ is an $S'$-unit such that $\tau + 1$ is also an $S'$-unit.
  By \cite[Theorem 5.4.1]{BG}, the set of such $\tau$ is finite and
  effectively computable.  Thus, if $u$ and $w$ are multiplicatively
  independent, then $\frac{f^{\circ m}(u)}{f^{\circ n}(w)}$ takes on
  any such value $\tau$ at most once, so there are at most finitely
  many $(m,n)$ such that $f^{\circ m}(u)$ is $S'$-integral relative to
  $f^{\circ n}(w)$.  For any fixed value of $1+\tau$, the set of $m,n$
  such that $u^{d^{2m}} / w^{d^{2n}} = 1 + \tau$ clearly forms a
  finite union of cosets of subsemigroups of $\bN^2$.

  If $u$ and $w$ are both in the subgroup of $K^*$ generated by a
  single element $z$ that is not a root of unity, then we may write $u
  = z^A$, $w = z^B$.  Then, we have $z^{A d^m - B d^n} = (1 + \tau)$
  for one of the finitely many $1 + \tau$ above whenever $f^{\circ m}(u)$ is
  $S'$-integral relative to $f^{\circ n}(w)$.  Now, for any constant $C$, the
  set of $(m,n)$ such that $A d^m - B d^n = C$ is finite
   unless $C = 0$ (since $\gcd(A d^m, B d^n) \to \infty$ to
  infinity as $\min(m, n) \to \infty$), but when $C =0$,we have $\tau = 0$, which is
  not an $S$-unit.  Hence, in this case there are at most finitely many $(m,n)$ such that
  $f^{\circ m}(u)$ is $S'$-integral relative to $f^{\circ n}(w)$.
\end{proof}
 
When at least one of $u$ or $w$ is preperiodic, but neither of $u$ or
$w$ is exceptional, it is easy to see from Theorem~\ref{main} that the
set of $(m,n) \in \bN^2$ such that $f^{\circ m}(u)$ is $S$-integral
relative to $f^{\circ n}(w)$ forms a finite union of effectively
computable cosets of subsemigroups of $\bN^2$.  When $u$ or $w$ is
exceptional, however, one should not expect there to be a particularly nice
pattern to the set of $(m,n)$ such that $f^{\circ m}(u)$ is
$S$-integral relative to $f^{\circ n}(w)$.  Benedetto-Briend-Perdry
\cite{BBP} show that if $f(x) = x^2 + \frac{x}{p}$, and $v$ is the
point at infinity, then for any set $\cU$ of positive integers, there
is a point $u \in \bQ_p$ such that $f^{\circ m}(u) \in \bZ_p$ if and
only if $m \in \cU$; although this is only stated over $\bQ_p$, it is
very likely that one can find examples for many complicated infinite
$\cU$ over $\bQ$.  This problem can be overcome by enlarging $S$ to a
finite set of primes $S'$ including all the primes of bad reduction
for $S$.

\begin{proposition}\label{exceptional}
Suppose that $w$ is exceptional and that there is no $m$ such that
$f^{\circ }(u) = w$.  Then for some finite set of places $S'$ with
$S \subseteq S'$,  the set $I_{u, w,S'}$ is all of $\bN^2$.
\end{proposition}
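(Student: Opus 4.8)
The plan is as follows. First I would observe that, $w$ being exceptional, it is in particular a fixed point of $f^{\circ 2}$, so the orbit of $w$ is contained in the two-element set $\{w,f(w)\}$: one has $f^{\circ n}(w)=w$ for $n$ even and $f^{\circ n}(w)=f(w)$ for $n$ odd. Hence it suffices to exhibit a finite set of places $S'\supseteq S$ with the property that $f^{\circ m}(u)$ is $S'$-integral relative to both $w$ and $f(w)$ for every $m\ge 0$; this immediately gives $I_{u,w,S'}=\bN^2$.

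Next I would normalize $f$ by a change of coordinates. Multiplicativity of ramification indices along $f^{\circ 2}=f\circ f$ at its fixed point $w$ gives $e_f(w)\cdot e_f(f(w))=d^2$, forcing $f$ to be totally ramified at both $w$ and $f(w)$. Two cases arise. If $f(w)=w$, then conjugating by the automorphism of $\bP_1$ over $K$ that sends $w$ to $\infty$ turns $f$ into a polynomial of degree $d$ in $K[x]$. If $f(w)\ne w$, then $f$ interchanges the two distinct $K$-rational points $w$ and $f(w)$, and conjugating over $K$ so that $w\mapsto\infty$ and $f(w)\mapsto 0$ makes the only pole of $f$ lie at $0$ (of order $d$), with a zero of order $d$ at $\infty$; hence $f(x)=c/x^{d}$ for some $c\in K^{*}$. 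Exactly as in the proof of Proposition~\ref{cyclic}, after adjoining to $S$ the finitely many primes occurring in the conjugating automorphism, $S'$-integrality relative to the relevant points is unchanged, so I may assume $f$ is already in one of these two normal forms.

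I would then conclude as follows. In the polynomial case $w=\infty$, so $f^{\circ n}(w)=\infty$ for all $n$ and $S'$-integrality relative to $w$ just means membership in $\fo_{S'}$; the hypothesis that no iterate of $u$ equals $w$ shows $u\in K$, and after enlarging $S'$ by the primes dividing a coefficient of $f$ and those in the support of the divisor of $u$ we get $f\in\fo_{S'}[x]$ and $u\in\fo_{S'}$, so $\fo_{S'}$ is forward invariant under $f$ and $f^{\circ m}(u)\in\fo_{S'}$ for all $m$. In the case $f(x)=c/x^{d}$ one has $\{w,f(w)\}=\{\infty,0\}$, and being $S'$-integral relative to both $\infty$ and $0$ is exactly the condition of being an $S'$-unit; the backward orbit of $\infty$ under $c/x^{d}$ is $\{0,\infty\}$, so the hypothesis on $u$ forces $u\in K^{*}$, and after enlarging $S'$ by the primes in the support of $u$ and of $c$ both $u$ and $c$ are $S'$-units, whence $f^{\circ m}(u)$ is an $S'$-unit for all $m$ since $c/x^{d}$ preserves $\fo_{S'}^{*}$. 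In either normal form $f^{\circ m}(u)$ is $S'$-integral relative to every point of $\{w,f(w)\}$, hence relative to $f^{\circ n}(w)$ for all $n$, giving $I_{u,w,S'}=\bN^{2}$.

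The only genuinely delicate point I anticipate is the bookkeeping around the changes of coordinates: one must check that the primes needed to neutralize the conjugating automorphism, together with those coming from the coefficients of the normalized $f$ and from $u$, form a finite set, and that the conjugation can be taken over $K$ so that no extension of the ground field is required. Once that is in place the dynamical input is entirely elementary, namely that a polynomial preserves its ring of $S$-integers and that $c/x^{d}$ preserves the group of $S$-units, provided $S$ has been enlarged enough.
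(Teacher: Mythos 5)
Your proof is correct, and its skeleton---conjugate so that the exceptional point sits at infinity, enlarge $S$ to absorb the conjugating automorphism and the relevant coefficients, then use forward invariance of $S'$-integers (resp.\ $S'$-units)---is the same as the paper's. The difference is in the case analysis: the paper passes to $f^{\circ 2}$, which becomes a polynomial fixing $w=\infty$, and verifies only that every iterate of $u$ is $S'$-integral relative to $w$ itself; when $f(w)\neq w$ this leaves the odd iterates $f^{\circ n}(w)=f(w)$ implicitly to the reader. You instead normalize $f$ itself into one of two forms (a polynomial when $f(w)=w$, or $c/x^{d}$ swapping $0$ and $\infty$ when $f(w)\neq w$), and in the second case you correctly identify $S'$-integrality relative to the whole orbit $\{w,f(w)\}=\{\infty,0\}$ with being an $S'$-unit, which $c/x^{d}$ visibly preserves once $c$ and $u$ are $S'$-units. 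This buys an explicit and complete treatment of the one point the paper's proof elides. One small remark worth making explicit: in the $c/x^{d}$ case the hypothesis that no iterate of $u$ equals $w$ also rules out iterates equal to $f(w)=0$, since $f^{\circ m}(u)=0$ would force $f^{\circ (m+1)}(u)=\infty=w$; with that observed, your argument checks out in full, and the coordinate change is indeed definable over $K$ because $w$ and $f(w)$ are $K$-rational.
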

\begin{proof}
  Arguing as in Proposition~\ref{cyclic}, we may change coordinates so
  that $f^{\circ 2}$ is a polynomial and $w$ is the point at infinity
  and enlarge $S$ to some $S'$ where our notion of $S'$-integrality is
  not affected by the coordinate change.  If we enlarge $S'$ further
  to include all of the places at which $u$, $f(u)$, or a coefficient
  of $f^{\circ 2}$ has a pole, then $f^{\circ 2m}(u)$ and $f^{\circ
    2m}(f(u))$ are $S'$-integral relative to $w$ for all $m$, so
  $I_{u,w,S'}$ is all of $\bN^2$.
\end{proof}
 
\section{Further questions}
 
If $f$ and $g$ are two rational functions of degree $d>1$  such that
there are no $z_1, z_2$ such that $f^{\circ 2}(z_1) =
g^{\circ 2}(z_2)$ with $f^{\circ 2}$ ramifying at $z_1$ and $g^{\circ
  2}$ ramifying at $z_2$ (a reasonably ``generic'' condition), then
$f^{\circ 2}(x) - g^{\circ 2}(y) = 0$ gives a nonsingular
curve corresponding to a divisor $D_2$ of type $(2d, 2d)$ on $\bP_1^2$.
Since $d \geq 2$, we have that $D_2 + K_X$ is ample for $K_X$ a
canonical divisor of $\bP_1^2$.  Thus, Vojta's conjecture
\cite[Conjecture 3.4.3]{PV}
would imply
that the set of $S$-integral points relative to $D_2$ must be
degenerate.  Hence, we may expect that an analog of 
Theorem~\ref{ineff-degen}  holds in this case.  It may be that the method of this paper allows one to prove such general statements.



\newcommand{\etalchar}[1]{$^{#1}$}
\def\cprime{$'$} \def\cprime{$'$} \def\cprime{$'$} \def\cprime{$'$}
\providecommand{\bysame}{\leavevmode\hbox to3em{\hrulefill}\thinspace}
\providecommand{\MR}{\relax\ifhmode\unskip\space\fi MR }
\providecommand{\MRhref}[2]{%
  \href{http://www.ams.org/mathscinet-getitem?mr=#1}{#2}
}
\providecommand{\href}[2]{#2}

\end{document}